\newtheorem{theorem}{Theorem}[section]
\newtheorem{lemma}{Lemma}[section]
\theoremstyle{Definition}
\newtheorem{definition}{Definition}[section]
\newtheorem{example}{Example}[section]
\theoremstyle{remark}
\newtheorem{remark}[theorem]{Remark}
\numberwithin{equation}{section}
\begin{document}

\begin{flushleft}
 {\bf\Large {Towards  Quaternion Quadratic-Phase Fourier Transform  }}

\parindent=0mm \vspace{.2in}

{\bf{Aamir H. Dar$^{1},$ and M. Younus Bhat $^{2}$ }}
\end{flushleft}

{{\it $^{1}$ Department of  Mathematical Sciences,  Islamic University of Science and Technology Awantipora, Pulwama, Jammu and Kashmir 192122, India.E-mail: $\text{ahdkul740@gmail.com}$}}

{{\it $^{2}$ Department of  Mathematical Sciences,  Islamic University of Science and Technology Awantipora, Pulwama, Jammu and Kashmir 192122, India.E-mail: $\text{gyounusg@gmail.com}$}}

\begin{quotation}
\noindent
{\footnotesize {\sc Abstract.} The quadratic-phase Fourier transform (QPFT) is a neoteric addition to the
class of Fourier transforms and embodies a variety of signal processing tools
including the Fourier, fractional Fourier, linear canonical, and special affine
Fourier transform. In this paper, we generalize the quadratic-phase Fourier transform to quaternion-valued signals, known as the
quaternion QPFT (Q-QPFT). We
initiate our investigation by studying the QPFT of 2D quaternionic signals, then we introduce the Q-QPFT of 2D quaternionic signals.
Using the fundamental relationship between the Q-QPFT and quaternion Fourier transform (QFT), we derive the inverse transform and Parseval and Plancherel formulas associated with the Q-QPFT. Some other
properties including linearity, shift and modulation  of the Q-QPFT are also studied. Finally, we formulate several classes of uncertainty
principles (UPs) for the Q-QPFT, which including Heisenberg-type UP,
logarithmic UP,  Hardy’s UP,  Beurling’s UP and Donoho-Stark’s
UP. It can
be regarded as the first step in the applications of the Q-QPFT in the real world.
 \\

{ Keywords:} Quaternion Quadratic-phase Fourier transform; Parseval's formula; Inversion; Modulation; Uncertainty principle; Donoho-Stark.\\
\noindent
\textit{2000 Mathematics subject classification: } 42A05; 42C20; 42B10; 26D10.}
\end{quotation}

\section{Introduction} \label{sec intro}
\noindent
In time-frequency analysis, the most recent signal processing  tool is the quadratic-phase Fourier transform (QPFT) introduced by Castro et al.\cite{akp1}  which provides a unified treatment of both the transient
and non-transient signals in a simple and insightful fashion.
The QPFT has five real parameters with exponential kernel. With a slight modification in \cite{akp1}, we define the QPFT as
\begin{equation}\label{qpft}
\mathcal Q_{\mu}[f](w)=\int_{\mathbb R}f(x)\Lambda_\mu(x,w)dx,
\end{equation}
where $\Lambda_\mu(x,w)$ is a quadratic-phase kernel and is given by
\begin{equation}\label{ker qpft}
\Lambda_\mu(x,w)=\sqrt{\frac{bi}{2\pi}}e^{-i(ax^2+bxw+cw^2+dx+ew)}
\end{equation}
and the corresponding inversion formula is given by
\begin{equation}\label{inv qpft}
f(x)=\int_{\mathbb R}\mathcal Q_{\mu}[f](w)\overline{\Lambda_\mu(x,w)}dw,
\end{equation}
where $a,b,c,d,e\in \mathbb R,$ $b\ne0$. These arbitrary real parameters present in (\ref{ker qpft}) are of great importance as their choice sense of rotation as well as shift can be inculcated in both the axis of time and frequency domain. Hence can be used in better analysis of non-transient signals which are employed in radar and other communication systems. Due to its global kernel and extra degrees of freedom, the QPFT has arrived  an efficient tool in solving several problems arising in diverse branches of science and
engineering, including harmonic analysis, image
processing, sampling, reproducing kernel Hilbert spaces and so on \cite{n1,n2,n3,n4}.\\

The generalization of integral transforms from real and complex numbers to the quaternion setting is popular nowadays for the study of higher dimension viz: the quaternion Fourier transform (QFT) \cite{novel17,2s20}, the quaternion linear canonical transform (QLCT) \cite{novel22,novel23}, the fractional
quaternion Fourier transform (Fr-QFT) \cite{novel33,novel34}, the quaternion offset linear canonical transform (QOLCT) \cite{o1,o2,o3,o4}. In  past decades, quaternion algebra has become a leading area of research with its
applications in color image processing, image filtering, watermarking, edge detection and
pattern recognition(see \cite{1d13,1d14,1d15,1d18,1d19,1d20,1d21}). The  Fourier transform (FT) in quaternion setting i.e.the  quaternion Fourier transform (QFT) \cite{1d22} plays a significant role in the representation of hyper-complex signals in signal processing which is believed to be the substitute of the commonly used two-
dimensional Complex Fourier Transform (CFT). The QFT has wide range of applications
see(\cite{1d23,1d24}). On the other hand the uncertainty principle (UP) plays a vital role in various scientific fields
such as mathematics, quantum physics, signal processing  and information theory \cite{novel9,novel12,novel16}. The UPs like Heisenberg’s, Hardy’s, Beurling’s associated with QFT are  given in \cite{gen1,gen3,gen4,gen5} and the extension of UPs in the domains of QLCT,QOLCT are given in \cite{s1,s2,s3,2sself,2s22}. These UPs have many applications in the analysis of optical systems, signal recovery  and so on see(\cite{novel32,novel13,novel25,novel30}). Therefore modern era of information processing is in dire need of quaternionic valued signals and therefore is a very
hot area of research.
Since the  QPFT  is a five parameter
class of linear integral transform and  has more
degrees of freedom and is more flexible than the FT, the
FRFT, the LCT but with similar computation cost as the conventional
FT. Due to the mentioned advantages, it is natural to
generalize the classical QPFT to the quaternionic algebra.

To the best of our knowledge, the generalization of the QPFT to quaternion algebra,
and the study of the properties and UPs associated with Q-QPFT have not been
carried out yet. So motivated and inspired by the merits of QPFT and QFT, we in this paper propose the novel integral transform coined as the quaternion quadratic-phase Fourier transform (Q-QPFT), which provides a unified treatment for several existing classes of signal processing tools. Therefore it
is worthwhile to rigorously study the Q-QPFT and associated UPs
which can be productive for signal processing theory and applications.

\subsection{Paper Contributions}\,\\

The contributions  of this  paper are summarized below:\\

\begin{itemize}

\item  To introduce a novel integral transform coined as the quaternion quadratic-phase Fourier transform.\\

\item To establish the fundamental relationship between the proposed transform (Q-QPFT) and the quaternion Fourier transform (QFT).\\

\item To study the fundamental properties of the proposed transform, including the parseval's formula, inversion formula, shift and modulation.\\

  \item To formulate several classes of uncertainty principles, such as the Heisenberg UP and  the logarithmic UP
associated with the quaternion quadratic-phase Fourier transform.\\

\item To formulate the Hardy's,  Beurling's and Donoho-Stark’s uncertainty principles for the Q-QPFT.\\

\end{itemize}

\subsection{Paper Outlines}\,\\
The paper is organized as follows: In Section \ref{sec 1}, we give a brief  review to the quaternion algebra and summarize some definitions and results of two-sided QFT useful in the sequel. The definition and the properties of the novel Q-QPFT
 are studied in Section \ref{sec 2}. In Section \ref{sec3}, we establish
some different forms of uncertainty principles (UPs) for the two-sided Q-QPFT which including
 Heisenberg-type UP, logarithmic UP, Hardy’s UP, Beurling’s UP, and Donoho-Stark’s UP . Finally, a conclusion is drawn in Section \ref{sec 4}.

\section{Preliminary}\label{sec 1}
In this section,  we give a brief  review to the quaternion algebra and summarize some definitions and results of two-sided QFT which will be needed throughout the paper.

\subsection{Quaternion}\,\\
Hamilton introduced the 4-D quaternion algebra in 1843 denoted by $\mathbb H$ in his honor,
\begin{equation*}
\mathbb H=\{q=[q]_0+i[q]_1+j[q]_2+k[q]_3,[q]_i\in \mathbb R,i=0,1,2,3\},
\end{equation*}
which has three imaginary units $\{i,j,k\}$ and obey Hamilton's multiplication rules: $i^2=j^2=k^2=ijk=-1,ij=-ji=k.$ Let $[q]_0$ and $\b{q}=i[q]_1+j[q]_2+k[q]_3$ be the real scalar part and the vector part of quaternion number $q\in \mathbb H.$ The conjugate of quaternion number $q\in \mathbb H$ is given by
$$q=[q]_0-i[q]_1-j[q]_2-k[q]_3$$
and its norm is defined as
$$|q|=\sqrt{q\overline{q}}=\sqrt{[q]^2_0+[q]^2_1+[q]^2_2+[q]^2_3}.$$
Also it is easy to check that
$$|pq|=|p||q|,\quad p,q\in \mathbb H.$$
Moreover the real scalar part has a cyclic multiplication symmetry
\begin{equation}\label{cyclic sym}
[pqr]_0=[qrp]_0=[rpq]_0,\quad \forall p,q,r\in \mathbb H.
\end{equation}
The inner product of quaternion functions $f,g$
 on $\mathbb R^2$ with values in $\mathbb H$ is defined as follows:

\begin{equation*}
\langle f,g\rangle=\int_{\mathbb R^2}f({\bf x})\overline{g({\bf x})}d{\bf x},\quad d{\bf x}=dx_1dx_2,
\end{equation*}
with symmetric real scalar part
\begin{equation}\label{sym int}
\langle f,g\rangle=\frac{1}{2}[(f,g)+(g,f)]=\int_{\mathbb R^2}\left[f({\bf x})\overline{g({\bf x})}\right]_0d{\bf x}.
\end{equation}
And for $f=g$, we obtain the $L^2(\mathbb R^2,\mathbb H)-$norm:
\begin{equation}
\|f\|=\left(\int_{\mathbb R^2}|f({\bf x})|^2d{\bf x}\right)^{1/2}.
\end{equation}

\subsection{Quaternion Fourier transform}\,\\
The QFT plays a vital role in signal processing and color imaging. There are three different types of QFT, the left-sided QFT, the two-sided QFT and  the right-sided QFT. Here our focus will be on two-sided QFT (in the rest of paper QFT means two-sided QFT).

 \begin{definition}[QFT \cite{gen1}]\label{qft}
The two-sided QFT of a quaternion signal $f\in L^1(\mathbb R^2,\mathbb H)$ is defined by
\begin{equation}\label{eqn qft}
\mathcal F^{\mathbb H}[f]({\bf w})=\sqrt{\frac{1}{(2\pi)^2}}\int_{\mathbb R^2}e^{-ix_1w_1}f({\bf x})e^{-jx_2w_2}d{\bf x},
\end{equation}
and corresponding inverse QFT is given by
\begin{equation}\label{eqn invqft}
f({\bf x})=\sqrt{\frac{1}{(2\pi)^2}}\int_{\mathbb R^2}e^{ix_1w_1}\mathcal F^{\mathbb H}[f]({\bf w})e^{jx_2w_2}d{\bf w},
\end{equation}
where ${\bf x}=(x_1,x_2)$ and ${\bf w}=(w_1,w_2).$
\end{definition}
\begin{lemma}[QFT Parseval \cite{2s20}]\label{lem par qft} The quaternion product of $f,g\in L^1(\mathbb R^2,\mathbb H)\cap L^2(\mathbb R^2,\mathbb H)$ and its QFT are related by
\begin{equation}
\langle f,g \rangle_{L^2(\mathbb R^2,\mathbb H)}=\langle \mathcal F^{\mathbb H}[f],\mathcal F^{\mathbb H}[g] \rangle_{L^2(\mathbb R^2,\mathbb H)}.
\end{equation}
In particular if $f=g$ we get the quaternion version of the
Plancherel formula; that is,
\begin{equation}
\|f\|^2_{L^2(\mathbb R^2,\mathbb H)}=\|\mathcal F^{\mathbb H}[f]\|_{L^2(\mathbb R^2,\mathbb H)}.
\end{equation}
\end{lemma}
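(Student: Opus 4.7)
The plan is to prove the Parseval identity by expanding $\overline{g({\bf x})}$ via the inverse QFT (\ref{eqn invqft}), swapping the order of integration via Fubini, and then recognizing the inner integral as $\mathcal F^{\mathbb H}[f]$. The non-commutativity of $\mathbb H$ makes a naive rearrangement impossible, so the key is to work with the \emph{symmetric} form of the inner product given in (\ref{sym int}), namely $\langle f,g\rangle = \int_{\mathbb R^2}[f({\bf x})\overline{g({\bf x})}]_0\,d{\bf x}$, and exploit the cyclic symmetry (\ref{cyclic sym}) to move the factors into the right position.

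First, I would apply the inverse QFT (\ref{eqn invqft}) to $g$ and take its quaternion conjugate, using $\overline{pq}=\bar q\bar p$, to obtain
\begin{equation*}
\overline{g({\bf x})}=\frac{1}{2\pi}\int_{\mathbb R^2}e^{-jx_2w_2}\,\overline{\mathcal F^{\mathbb H}[g]({\bf w})}\,e^{-ix_1w_1}\,d{\bf w}.
\end{equation*}
Substituting this into $\langle f,g\rangle=\int_{\mathbb R^2}[f({\bf x})\overline{g({\bf x})}]_0\,d{\bf x}$ gives a double integral whose integrand is the real scalar part of the product
\begin{equation*}
f({\bf x})\,e^{-jx_2w_2}\,\overline{\mathcal F^{\mathbb H}[g]({\bf w})}\,e^{-ix_1w_1}.
\end{equation*}

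Next, I would invoke the cyclic symmetry $[ABC]_0=[CAB]_0$ (with $A=f({\bf x})e^{-jx_2w_2}$, $B=\overline{\mathcal F^{\mathbb H}[g]({\bf w})}$, $C=e^{-ix_1w_1}$) to rewrite this scalar part as
\begin{equation*}
\bigl[e^{-ix_1w_1}\,f({\bf x})\,e^{-jx_2w_2}\,\overline{\mathcal F^{\mathbb H}[g]({\bf w})}\bigr]_0.
\end{equation*}
Assuming the standing hypothesis $f,g\in L^1\cap L^2$, which makes the integrand absolutely integrable on $\mathbb R^2\times \mathbb R^2$, Fubini's theorem allows me to integrate in ${\bf x}$ first. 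Pulling $\overline{\mathcal F^{\mathbb H}[g]({\bf w})}$ (which is independent of ${\bf x}$) out on the right and recognizing the remaining ${\bf x}$-integral as precisely $\mathcal F^{\mathbb H}[f]({\bf w})$ from Definition \ref{qft}, I arrive at $\int_{\mathbb R^2}[\mathcal F^{\mathbb H}[f]({\bf w})\,\overline{\mathcal F^{\mathbb H}[g]({\bf w})}]_0\,d{\bf w}$, which is $\langle \mathcal F^{\mathbb H}[f],\mathcal F^{\mathbb H}[g]\rangle$. Specializing to $f=g$ yields the Plancherel identity.

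The main obstacle is the non-commutativity of $\mathbb H$: one cannot simply commute $f({\bf x})$ past $e^{-jx_2w_2}$ or $\overline{\mathcal F^{\mathbb H}[g]({\bf w})}$ past $e^{-ix_1w_1}$ to reassemble the QFT kernel. This is exactly why passing to the real scalar part and using the cyclic property (\ref{cyclic sym}) is essential; without it, the proof would require a split-quaternion decomposition and a reduction to the complex 2D Parseval identity. A secondary technical point is justifying Fubini, which is immediate from the $L^1\cap L^2$ assumption since $|e^{-ix_1w_1}|=|e^{-jx_2w_2}|=1$, so the combined integrand in ${\bf x},{\bf w}$ is dominated by $|f({\bf x})|\,|\overline{\mathcal F^{\mathbb H}[g]({\bf w})}|$.
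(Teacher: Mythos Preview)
The paper does not supply its own proof of this lemma; it is quoted as a known preliminary result from \cite{2s20}, so there is nothing in the paper to compare your argument against. Your approach---expand $\overline{g}$ via the inverse QFT, use the cyclic symmetry (\ref{cyclic sym}) of the scalar part to rotate $e^{-ix_1w_1}$ to the front, then integrate in ${\bf x}$ first to recover $\mathcal F^{\mathbb H}[f]$---is exactly the standard route and is correct in outline.

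One caveat: your Fubini justification is not quite complete. You bound the joint integrand by $|f({\bf x})|\,|\mathcal F^{\mathbb H}[g]({\bf w})|$, but integrability of this product over $\mathbb R^2\times\mathbb R^2$ requires $\mathcal F^{\mathbb H}[g]\in L^1(\mathbb R^2,\mathbb H)$, which does not follow from $g\in L^1\cap L^2$ alone. The usual fix is either to assume additionally that $\mathcal F^{\mathbb H}[g]\in L^1$ (so that the inversion formula (\ref{eqn invqft}) is valid pointwise in the first place), or to run the argument for $f,g$ in a dense class such as $\mathcal S(\mathbb R^2,\mathbb H)$ and then extend by continuity using the already-established Plancherel identity. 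Either way the gap is routine, but as written your $L^1\cap L^2$ hypothesis does not by itself license the interchange.
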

\begin{lemma}\cite{2sself}\label{lem young}
If $1\le p\le 2$ and letting $\frac{1}{p}+\frac{1}{q},$ for all $f\in L^p(\mathbb R^2,\mathbb H)$, then it holds
\begin{equation}
\|\mathcal F^{\mathbb H}\|_q\le(2\pi)^{\frac{1}{q}-\frac{1}{p}}\|f\|_p.
\end{equation}
\end{lemma}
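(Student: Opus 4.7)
The plan is to prove this Hausdorff--Young type inequality for the QFT by Riesz--Thorin interpolation between the elementary $L^1\to L^\infty$ bound and the $L^2\to L^2$ bound furnished by the QFT Plancherel theorem (the preceding lemma).

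First I would establish the two endpoint estimates. For $p=1$, invoking the multiplicativity $|pq|=|p||q|$ on $\mathbb H$ together with the fact that $|e^{-ix_1w_1}|=|e^{-jx_2w_2}|=1$ (since $i,j$ are pure unit imaginaries in $\mathbb H$), we get from Definition~\ref{qft} the pointwise bound
\begin{equation*}
|\mathcal F^{\mathbb H}[f]({\bf w})|\le \frac{1}{2\pi}\int_{\mathbb R^2}|f({\bf x})|\,d{\bf x}=\frac{1}{2\pi}\|f\|_1,
\end{equation*}
so $\|\mathcal F^{\mathbb H}\|_{L^1\to L^\infty}\le (2\pi)^{-1}$. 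For $p=2$, Lemma~\ref{lem par qft} gives $\|\mathcal F^{\mathbb H}[f]\|_2=\|f\|_2$, i.e.\ operator norm $1=(2\pi)^{0}$.

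Next I would interpolate. Choose $\theta=2/q\in[0,1]$, so that $\tfrac{1}{p}=(1-\theta)+\tfrac{\theta}{2}=1-\tfrac{\theta}{2}$ and $\tfrac{1}{q}=\tfrac{\theta}{2}$. The Riesz--Thorin theorem yields
\begin{equation*}
\|\mathcal F^{\mathbb H}[f]\|_q\le \bigl((2\pi)^{-1}\bigr)^{1-\theta}\cdot 1^{\theta}\,\|f\|_p=(2\pi)^{-(1-\theta)}\|f\|_p=(2\pi)^{\frac{1}{q}-\frac{1}{p}}\|f\|_p,
\end{equation*}
since $1-\theta=1-\tfrac{2}{q}=\tfrac{1}{p}-\tfrac{1}{q}$. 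This matches the claimed constant exactly.

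The main obstacle is that Riesz--Thorin is classically stated for complex-valued (or at worst vector-valued) operators, whereas $\mathcal F^{\mathbb H}$ acts between quaternion-valued $L^p$ spaces and the non-commutativity of $\mathbb H$ forbids a direct textbook citation. I would resolve this by splitting $f=[f]_0+i[f]_1+j[f]_2+k[f]_3$ into its four real components and writing $\mathcal F^{\mathbb H}[f]$ as a sum of four expressions of the form $e^{-ix_1w_1}[f]_\ell({\bf x})e^{-jx_2w_2}$. Each such expression, after expanding the outer exponentials, is a matrix of scalar linear operators on complex-valued $L^p$ to which classical Riesz--Thorin applies with the same two endpoint constants, and the equivalence of $|q|^2=\sum_\ell [q]_\ell^2$ with the $\ell^2$ norm on the components guarantees that the componentwise $L^p$ norms are uniformly comparable with the quaternionic $\|\cdot\|_p$. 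Reassembling these componentwise estimates (and absorbing the universal constants into a standard density/approximation argument on $L^1\cap L^2$) delivers the stated bound.
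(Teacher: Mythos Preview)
The paper does not prove this lemma; it is quoted from the cited reference \cite{2sself}, so there is no in-paper argument to compare against. Your strategy of Riesz--Thorin interpolation between the elementary $L^1\to L^\infty$ bound and the Plancherel $L^2\to L^2$ isometry is the standard route to Hausdorff--Young, and your endpoint estimates and the exponent arithmetic $(1-\theta)=\tfrac{1}{p}-\tfrac{1}{q}$ are correct.

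The gap is in your final paragraph. Decomposing $f$ into its four real components, applying scalar Riesz--Thorin to each resulting block, and then reassembling via the equivalence of the quaternion modulus with the $\ell^2$-norm on components will in general cost a fixed combinatorial factor; a density argument on $L^1\cap L^2$ extends a bound from a dense subspace but cannot improve its constant, so as written you would obtain $C\,(2\pi)^{1/q-1/p}$ for some $C>1$ rather than the sharp constant stated. The clean repair is to complexify the operator rather than decompose the signal: embed $L^p(\mathbb R^2,\mathbb H)$ isometrically into $L^p(\mathbb R^2,\mathbb C^4)$ via the identification $|q|^2=\sum_\ell[q]_\ell^2$, and extend $\mathcal F^{\mathbb H}$ $\mathbb C$-linearly. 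The extension keeps the $(2\pi)^{-1}$ bound $L^1\to L^\infty$ (Minkowski's integral inequality for the $\ell^2$-valued fibres gives $\bigl((\int|f|)^2+(\int|g|)^2\bigr)^{1/2}\le\int(|f|^2+|g|^2)^{1/2}$) and remains an $L^2$ isometry (the complexification of a real Hilbert-space isometry is again an isometry). Complex Riesz--Thorin then applies directly with the exact constant, and restricting back to $\mathbb H$-valued inputs gives the lemma.
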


 \section{Quaternion Quadratic-Phase Fourier Transform}\label{sec 2}

In this section we extend the definition of QPFT (\ref{qpft}) to the 2D quaternionic signals and then based on the definition of the QFT (\ref{qft}), we define the novel quaternion QPFT of 2D quaternionic signals and establish several fundamental properties associated with the proposed Q-QPFT.
\subsection{QPFTs and Q-QPFT of 2D Quaternionic Signals}
\begin{definition}[QPFTs of 2D Quaternionic Signals]\label{left,right qpft}Let $\mu_s=(a_s,b_s,c_s,d_s,e_s),$ $a_s,b_s,c_s,d_s,e_s\in \mathbb R$ and $b_s\ne 0$ for $s=1,2$. Then the left-sided and right-sided QFTs of 2D quaternion signals $f\in L^2(\mathbb R^2,\mathbb H)$ are defined by
\begin{equation}\label{left}
\mathbb Q^i_{l,\mu}[f](w_1,x_2)=\int_{\mathbb R^2}\Lambda^i_{\mu_1}(x_1,w_1)f(x_1,x_2)dx_1,
\end{equation}
\begin{equation}\label{right}
\mathbb Q^j_{r,\mu}[f](x_1,w_2)=\int_{\mathbb R^2}f(x_1,x_2)\Lambda^j_{\mu_2}(x_2,w_2)dx_2,
\end{equation}
where the kernels are given by
\begin{equation}\label{kerleft qpft}
\Lambda^i_{\mu_1}(x_1,w_1)=\sqrt{\frac{b_1i}{2\pi}}e^{-i(a_1x_1^2+b_1x_1w_1+c_1w_1^2+d_1x_1+e_1w_1)},
\end{equation}
\begin{equation}\label{kerright qpft}
\Lambda^j_{\mu_2}(x_2,w_2)=\sqrt{\frac{b_2j}{2\pi}}e^{-j(a_2x_2^2+b_2x_2w_2+c_2w_2^2+d_2x_2+e_2w_2)},
\end{equation}
respectively.
\end{definition}

\begin{theorem}[Plancherel Theorem for right-sided QPFT]Let $f,g\in L^1\cap L^2(\mathbb R^2,\mathbb H),$ then
\begin{equation}\label{planc right qpft}
\langle f,g\rangle_{L^2(\mathbb R^2,\mathbb H)}=\langle \mathbb Q^j_{r}[f],\mathbb Q^j_{r}[g]\rangle_{L^2(\mathbb R^2,\mathbb H)}.
\end{equation}
And for $f=g$, we get the Parseval theorem as
\begin{equation}\label{Parsevalright qpft}
\|f\|^2_{L^2(\mathbb R^2,\mathbb H)}=\|\mathbb Q^j_{r}[f],\mathbb \|^2_{L^2(\mathbb R^2,\mathbb H)}.
\end{equation}
\end{theorem}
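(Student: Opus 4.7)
The plan is to reduce the identity to the QFT Plancherel theorem (Lemma \ref{lem par qft}) by factorizing the right-sided QPFT kernel: absorb the pure $x_2$-chirp into the signal, then collect the pure $w_2$-phase together with the constant prefactor $\sqrt{b_2j/(2\pi)}$ to the right of a standard Fourier kernel. What makes the bookkeeping clean is that every factor of $\Lambda^j_{\mu_2}$ (including the prefactor) lies in the commutative subalgebra $\mathbb R\oplus j\mathbb R$, so those factors commute among themselves; they do not commute with the quaternion-valued $f$, and must therefore be kept to the right of $f$ throughout.

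Concretely, I would define the chirped signal and its partial Fourier transform
\begin{equation*}
\tilde f(x_1,x_2):=f(x_1,x_2)\,e^{-j(a_2 x_2^2+d_2 x_2)},\qquad \tilde F(x_1,\xi):=\int_{\mathbb R}\tilde f(x_1,x_2)\,e^{-j\xi x_2}\,dx_2
\end{equation*}
(and analogous $\tilde g,\tilde G$); since $|\tilde f|=|f|$ pointwise, $\tilde f\in L^1\cap L^2(\mathbb R^2,\mathbb H)$. A direct rearrangement of \eqref{right} then gives
\begin{equation*}
\mathbb Q^j_r[f](x_1,w_2)=\tilde F(x_1,b_2 w_2)\,e^{-j(c_2 w_2^2+e_2 w_2)}\,\sqrt{\tfrac{b_2 j}{2\pi}}.
\end{equation*}
Forming the product $\mathbb Q^j_r[f]\,\overline{\mathbb Q^j_r[g]}$ via $\overline{pq}=\bar q\bar p$, the two $w_2$-phases collapse to $1$ and the real scalar $\sqrt{\tfrac{b_2 j}{2\pi}}\,\overline{\sqrt{\tfrac{b_2 j}{2\pi}}}=|b_2|/(2\pi)$ (which is central) factors out, yielding
\begin{equation*}
\mathbb Q^j_r[f](x_1,w_2)\,\overline{\mathbb Q^j_r[g](x_1,w_2)}=\tfrac{|b_2|}{2\pi}\,\tilde F(x_1,b_2 w_2)\,\overline{\tilde G(x_1,b_2 w_2)}.
\end{equation*}

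Integrating over $(x_1,w_2)\in\mathbb R^2$ and substituting $\xi=b_2 w_2$ absorbs the Jacobian $|b_2|^{-1}$ against the numerator $|b_2|$, so $\langle\mathbb Q^j_r[f],\mathbb Q^j_r[g]\rangle = \tfrac{1}{2\pi}\langle\tilde F,\tilde G\rangle$. Applying a partial QFT Plancherel identity in the $x_2$-variable -- a one-dimensional specialization of Lemma \ref{lem par qft} obtained by freezing $x_1$ and running Parseval for the scalar Fourier transform through the quaternion-valued product -- supplies a factor of $2\pi$ and converts the right-hand side into $\langle\tilde f,\tilde g\rangle$. A final pointwise check shows $\tilde f\,\overline{\tilde g}=f\bar g$ (the opposite chirps annihilate), so $\langle\tilde f,\tilde g\rangle=\langle f,g\rangle$, establishing \eqref{planc right qpft}; the Parseval form \eqref{Parsevalright qpft} follows by setting $g=f$.

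The only delicate step is the quaternion non-commutativity: the kernel and its square-root prefactor must remain to the right of $f$ when pulled out of the $x_2$-integral, and one must exploit the centrality of $\sqrt{b_2 j/(2\pi)}\,\overline{\sqrt{b_2 j/(2\pi)}}=|b_2|/(2\pi)$ (or equivalently the cyclic symmetry \eqref{cyclic sym}) to prevent a residual quaternion conjugation from surviving when this constant sandwiches the integrand. Everything else is a routine change of variables.
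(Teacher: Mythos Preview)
Your argument is correct and aligns with the paper's approach: the paper's entire proof is the single sentence ``Proof of the above theorem follows from the one-dimensional case,'' and your chirp-stripping reduction to a one-variable Fourier Plancherel is precisely the detail that sentence suppresses. The care you take with the right-sided placement of the $j$-kernel factors and the centrality of $\sqrt{b_2 j/(2\pi)}\,\overline{\sqrt{b_2 j/(2\pi)}}$ is exactly what is needed and is more than the paper supplies.
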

\begin{proof}
Proof of the above theorem follows from the one-dimensional case.
\end{proof}

Now, we introduce  the quaternionic quadratic-phase Fourier transforms (Q-QPFTs). Due to the noncommutative
property of multiplication of quaternions, there are many different
types of Q-QPFTs: two-sided Q-QPFTs, left-sided Q-QPFTs,
and right-sided Q-QPFTs.
\begin{definition}[Q-QPFTs of 2D Quaternionic Signals]\label{def Q-QPFT}
Let $\mu_s=(a_s,b_s,c_s,d_s,e_s),$\\ $a_s,b_s,c_s,d_s,e_s\in \mathbb R$ and $b_s\ne 0$ for $s=1,2$. Then the two-sided Q-QPFT, right-sided Q-QPFT and left-sided Q-QPFT of signals $f\in L^1(\mathbb R^2,\mathbb H)$ are defined by
\begin{equation}\label{eqn 2sd}
\mathbb Q^{i,j}_{T,\mu_1,\mu_2}[f]({\bf w})=\int_{\mathbb R^2}\Lambda^i_{\mu_1}(x_1,w_1)f({\bf x})\Lambda^j_{\mu_2}(x_2,w_2)d{\bf x},
\end{equation}

\begin{equation}\label{eqn rsd}
\mathbb Q^{i,j}_{R,\mu_1,\mu_2}[f]({\bf w})=\int_{\mathbb R^2}f({\bf x})\Lambda^i_{\mu_1}(x_1,w_1)\Lambda^j_{\mu_2}(x_2,w_2)d{\bf x},
\end{equation}
\begin{equation}\label{eqn lsd}
\mathbb Q^{i,j}_{L,\mu_1,\mu_2}[f]({\bf w})=\int_{\mathbb R^2}\Lambda^i_{\mu_1}(x_1,w_1)\Lambda^j_{\mu_2}(x_2,w_2)f({\bf x})d{\bf x},
\end{equation}
respectively.
Where ${\bf w}=(w_1,w_2)\in\mathbb R^2,$ ${\bf x}=(x_1,x_2)\in\mathbb R^2$ and $\Lambda^i_{\mu_1}(x_1,w_1)$ and $\Lambda^j_{\mu_2}(x_2,w_2)$ are quaternion kernel signals given by (\ref{kerleft qpft})and (\ref{kerright qpft}).
\end{definition}
The following lemma gives the relationship between
various types of Q-QPFTs.
\begin{lemma}\label{l1}
For $f\in L^1(\mathbb R^4,\mathbb H),$ the two-sided Q-QPFT of a signal $f$ can be written as sum of the two right-sided or left-sided Q-QPFT as:
\begin{equation}\label{lem 1}
\mathbb Q^{j,i}_{T,\mu_1,\mu_2}[f]=\mathbb Q^{j,i}_{L,\mu_1,\mu_2}[f_p]+\mathbb Q^{j,-i}_{L,\mu_1,\mu_2}[f_q]j,
\end{equation}
\begin{equation}\label{lem1}
\mathbb Q^{i,j}_{T,\mu_1,\mu_2}[f]=\mathbb Q^{i,j}_{R,\mu_1,\mu_2}[f_p]+\mathbb Q^{i,j}_{R,\mu_1,\mu_2}[f_q]j,
\end{equation}
where $f=f_p+f_qj,$ $f_p=f_0+if_1,$ $f_0,f_1\in \mathbb R.$
\end{lemma}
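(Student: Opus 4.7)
The plan is to reduce both identities to algebraic rearrangements inside the quaternion algebra using the splitting $f=f_p+f_q j$ with $f_p,f_q$ valued in the complex subalgebra $\mathbb{C}_i=\{a+bi:a,b\in\mathbb{R}\}$. Three commutation facts drive the proof. First, any $\mathbb{C}_i$-valued quantity commutes with the $i$-kernels $\Lambda^i_{\mu_s}$ and $\Lambda^{-i}_{\mu_s}$, since those kernels themselves lie in $\mathbb{C}_i$. Second, because $ji=-ij$, one has $zj=j\bar z$ for every $z\in\mathbb{C}_i$; inspecting (\ref{kerleft qpft})--(\ref{kerright qpft}) shows that $\overline{\Lambda^i_{\mu_s}}=\Lambda^{-i}_{\mu_s}$, so that $j\,\Lambda^i_{\mu_s}=\Lambda^{-i}_{\mu_s}\,j$. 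Third, $j$ commutes with everything in $\mathbb{C}_j$, in particular with the $j$-kernels $\Lambda^j_{\mu_s}$.

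For identity (\ref{lem1}) I would insert $f=f_p+f_q j$ into the definition (\ref{eqn 2sd}) of the two-sided transform and split the integral into an $f_p$-part and an $f_q j$-part. In the $f_p$-part the first commutation fact lets me slide $f_p$ past $\Lambda^i_{\mu_1}$ to the far left, yielding precisely the integrand of $\mathbb{Q}^{i,j}_{R,\mu_1,\mu_2}[f_p]$. In the $f_q j$-part the third fact pushes the stray $j$ rightward through $\Lambda^j_{\mu_2}$, and the first fact then moves $f_q$ leftward through $\Lambda^i_{\mu_1}$; pulling the right-most $j$ outside the integral gives $\mathbb{Q}^{i,j}_{R,\mu_1,\mu_2}[f_q]\,j$, completing the identity.

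For identity (\ref{lem 1}) the ordering is reversed to $(j,i)$, so the right-hand kernel is now an $i$-kernel and the stray $j$ cannot simply slide past it. The $f_p$-part is again handled by the first fact and contributes $\mathbb{Q}^{j,i}_{L,\mu_1,\mu_2}[f_p]$. The essential step for the $f_q j$-part is to invoke the second fact in the form $j\,\Lambda^i_{\mu_2}=\Lambda^{-i}_{\mu_2}\,j$, which simultaneously flips the sign of $i$ in the kernel and frees the trailing $j$; the first fact then commutes $f_q$ through $\Lambda^{-i}_{\mu_2}$, and after gathering all kernel factors to the left of the coefficient one reads off the integrand of $\mathbb{Q}^{j,-i}_{L,\mu_1,\mu_2}[f_q]$ followed by a right factor of $j$.

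The one place I expect to tread carefully is the bookkeeping for the prefactor $\sqrt{b_s i/(2\pi)}$ inside each $i$-kernel, which is itself a non-real element of $\mathbb{C}_i$ whose complex conjugate must match the prefactor of $\Lambda^{-i}_{\mu_s}$. A short direct check, using that replacing $i$ by $-i$ conjugates both the scalar prefactor and the exponential simultaneously, confirms the identification $\overline{\Lambda^i_{\mu_s}}=\Lambda^{-i}_{\mu_s}$, after which no further subtlety arises. No analytic input beyond $f\in L^1$ is needed, since every rearrangement is a pointwise quaternion manipulation carried out under the integral sign.
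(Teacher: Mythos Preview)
Your proposal is correct and is precisely the standard symplectic--decomposition argument that the paper has in mind: the paper's own proof consists only of a pointer to Lemma~2.3 in \cite{sp}, which carries out exactly the manipulation you describe (split $f=f_p+f_qj$ with $f_p,f_q\in\mathbb{C}_i$, use $zj=j\bar z$ for $z\in\mathbb{C}_i$ to commute $j$ past the $i$--kernel, and read off the one--sided transforms). Your explicit check that $\overline{\Lambda^i_{\mu_s}}=\Lambda^{-i}_{\mu_s}$, including the prefactor $\sqrt{b_s i/2\pi}$, is the only point requiring care, and you have handled it.
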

\begin{proof}
Proof of above lemma follows by  the procedure of Lemma 2.3 in \cite{sp}.
\end{proof}
\begin{remark}
The Lemma \ref{l1} assures that it is sufficient to study two-sided Q-QPFT, as the analogue
results for left-sided Q-QPFT or right-sided Q-QPFT can be deduced from (\ref{lem 1}) and (\ref{lem 1}).
\end{remark}
 \subsection{Two-sided Q-QPFT}
In this subsection we study  the two-sided Q-QPFT (for
simplicity of notation we write the Q-QPFT instead of the two-sided
Q-QPFT). First we recall the definition of Q-QPFT (\ref{eqn 2sd}) and present an example for the lucid illustration of the proposed  transform,  then we show that Q-QPFT can be reduced to the two-sided QFT. Finally, we conclude this subsection with the properties of the Q-QPFT which are vital for signal processing.
\begin{definition}[Q-QPFT]\label{def Q-QPFT}
Let $\mu_s=(a_s,b_s,c_s,d_s,e_s)$ for $s=1,2$, then the two-sided Q-QPFT of signals $f\in L^1(\mathbb R^2,\mathbb H)$ is denoted  by $\mathbb Q^{\mathbb H}_{\mu_1,\mu_2}[f]$ and defined as
\begin{equation}\label{eqn q-qpft}
\mathbb Q^{\mathbb H}_{\mu_1,\mu_2}[f]({\bf w})=\int_{\mathbb R^2}\Lambda^i_{\mu_1}(x_1,w_1)f({\bf x})\Lambda^j_{\mu_2}(x_2,w_2)d{\bf x},
\end{equation}
where ${\bf w}=(w_1,w_2)\in\mathbb R^2,$ ${\bf x}=(x_1,x_2)\in\mathbb R^2$ and $\Lambda^i_{\mu_1}(x_1,w_1)$ and $\Lambda^j_{\mu_2}(x_2,w_2)$ are quaternion kernel signals given by (\ref{kerleft qpft})and (\ref{kerright qpft}), respectively.\\
Where $a_s,b_s,c_s,d_s,e_s\in \mathbb R,b_s\ne0$ and $s=1,2.$
\end{definition}
\begin{remark} By appropriately choosing parameters in $\mu_s=(a_s,b_s,c_s,d_s,e_s),s=1,2$  the Q-QPFT(\ref{eqn q-qpft})
includes many well-known linear transforms
as special cases:
\begin{itemize}
\item For $\mu_s=(0,-1,0,0,0),s=1,2$, the Q-QPFT (\ref{eqn q-qpft}) boils down to the Quaternion-Fourier Transform \cite{gen1}.
\item As a special case, when  $\mu_s=(a_s,b_s,c_s,0,0),s=1,2$ the Q-QPFT (\ref{eqn q-qpft})can be viewed as the Quaternion Linear Canonical Transform \cite{s1}.
\item  For $\mu_s=(\cot\theta,-\csc\theta,\cot\theta,0,0),s=1,2$ the the Q-QPFT (\ref{eqn q-qpft}) leads to the
two-sided FrQFT \cite{2sself}.
\end{itemize}
\end{remark}
We now present an example for the lucid illustration of the proposed quaternion quadratic-phase Fourier transform (\ref{eqn q-qpft})
\begin{example}
Consider a 2D Gaussian
quaternionic function $f(x)=e^{-(k_1x_1^2+k_2^2)},$ with
$\quad k_1,k_2\ge 0.$ \\
\end{example}
Then by definition of Q-QPFT, we have
\begin{eqnarray*}
&&\mathbb Q^{\mathbb H}_{\mu_1,\mu_2}[f]({\bf w})\\
&&=\int_{\mathbb R^2}\sqrt{\frac{b_1i}{2\pi}}e^{-i(a_1x_1^2+b_1x_1w_1+c_1w_1^2+d_1x_1+e_1w_1)}e^{-(k_1x_1^2+k_2^2)}\\
&&\qquad\qquad\qquad \times \sqrt{\frac{b_2j}{2\pi}} e^{-j(a_2x_2^2+b_2x_2w_2+c_2w_2^2+d_2x_2+e_2w_2)}d{\bf x}\\
&&=\sqrt{\frac{b_1i}{2\pi}}\int_{\mathbb R}e^{-i(a_1x_1^2+b_1x_1w_1+c_1w_1^2+d_1x_1+e_1w_1)}e^{-k_1x_1^2}dx_1\\
&&\qquad\qquad\qquad \times \sqrt{\frac{b_2j}{2\pi}} \int_{\mathbb R}e^{-j(a_2x_2^2+b_2x_2w_2+c_2w_2^2+d_2x_2+e_2w_2)}e^{-k_2x_2^2}dx_2\\
&&=\sqrt{\frac{b_1i}{2\pi}}e^{-i(c_1w_1^2+e_1w_1)}\int_{\mathbb R}e^{-i(a_1x_1^2+b_1x_1w_1+d_1x_1)}e^{-k_1x_1^2}dx_1\\
&&\qquad\qquad\qquad \times \sqrt{\frac{b_2j}{2\pi}} e^{-j(c_2w_2^2+e_2w_2)}\int_{\mathbb R}e^{-j(a_2x_2^2+b_2x_2w_2+d_2x_2)}e^{-k_2x_2^2}dx_2\\
&&=\sqrt{\frac{b_1i}{2\pi}}e^{-i(c_1w_1^2+e_1w_1)}\int_{\mathbb R}e^{-i[a_1x_1^2+(b_1w_1+d_1)x_1]}e^{-k_1x_1^2}dx_1\\
&&\qquad\qquad\qquad \times \sqrt{\frac{b_2j}{2\pi}} e^{-j(c_2w_2^2+e_2w_2)}\int_{\mathbb R}e^{-j[a_2x_2^2+(b_2w_2+d_2)x_2]}e^{-k_2x_2^2}dx_2\\
&&=\sqrt{\frac{b_1i}{2\pi}}e^{-i(c_1w_1^2+e_1w_1)}\int_{\mathbb R}e^{-(k_1+ia_1)\left[x_1+i\frac{b_1w_1+d_1}{2(k_1+ia_1)}\right]^2}dx_1e^{-\frac{(b_1w_1+d_1)^2}{4(k_1+ia_1)}}\\
&&\qquad\qquad\qquad \times \sqrt{\frac{b_2j}{2\pi}} e^{-j(c_2w_2^2+e_2w_2)}\int_{\mathbb R}e^{-(k_2+ja_2)\left[x_2+j\frac{b_2w_2+d_2}{2(k_2+ja_2)}\right]^2}dx_2e^{-\frac{(b_2w_2+d_2)^2}{4(k_2+ja_2)}}.\\
\end{eqnarray*}
Using $\int_{\mathbb R}e^{-z(x+z')^2}dx=\sqrt{\frac{\pi}{z}},$ for $z,z'\in \mathbb C$(Gaussian integral) in above equation ,we immediately obtain
\begin{eqnarray*}
&&\mathbb Q^{\mathbb H}_{\mu_1,\mu_2}[f]({\bf w})\\
&&=\sqrt{\frac{b_1i}{2\pi}}e^{-i(c_1w_1^2+e_1w_1)}\sqrt{\frac{\pi}{k_1+ia_1}}e^{-\frac{(b_1w_1+d_1)^2}{4(k_1+ia_1)}}\\
&&\qquad\qquad\qquad \times \sqrt{\frac{b_2j}{2\pi}} e^{-j(c_2w_2^2+e_2w_2)}\sqrt{\frac{\pi}{k_2+ja_2}}e^{-\frac{(b_2w_2+d_2)^2}{4(k_2+ja_2)}}\\
&&=e^{-i(c_1w_1^2+e_1w_1)}\sqrt{\frac{b_1i}{2(k_1+ia_1)}}e^{-\frac{(b_1w_1+d_1)^2}{4(k_1+ia_1)}}\\
&&\qquad\qquad\qquad \times  e^{-j(c_2w_2^2+e_2w_2)}\sqrt{\frac{b_2j}{2(k_2+ja_2)}}e^{-\frac{(b_2w_2+d_2)^2}{4(k_2+ja_2)}}.\\
\end{eqnarray*}

Now we gave the fundamental relationship between the proposed Q-QPFT and the QFT.
\begin{theorem}\label{relation} The Q-QPFT (\ref{eqn q-qpft}) of a quaternion signal $f\in L^1(\mathbb R^2,\mathbb H)$ can be reduced to the QFT
\begin{equation}\label{qft1}
\mathcal F^{\mathbb H}[G_f]({\bf w})=\frac{1}{\sqrt{(2\pi)^2}}\int_{\mathbf R^2}e^{-ix_1w_1}G_f({\bf x})e^{-jx_2w_2}d{\bf x},
\end{equation}
where
\begin{equation*}
\mathcal F^{\mathbb H}[G_f]({\bf w})=F\left(\frac{{\bf w}}{{\bf b}}\right),
\end{equation*}
\begin{equation}\label{func hf}
G_f({\bf x})=\sqrt{b_1i}\tilde f({\bf x})\sqrt{b_2j},
\end{equation}
\begin{equation*}
\tilde f({\bf x})=e^{-i(a_1x_1^2+d_1x_1)}f({\bf x})e^{-j(a_2x_2^2+d_2x_2)}
\end{equation*}
with \begin{equation*}
F({\bf w})=\frac{1}{\sqrt{(2\pi)^2}}\int_{\mathbf R^2}e^{-ix_1b_1w_1}G_f({\bf x})e^{-jx_2b_2w_2}d{\bf x},
\end{equation*}
\begin{equation}\label{fun F}
F({\bf w})=e^{i(c_1w_1^2+e_1w_1)}\mathbb Q^{\mathbb H}_{\mu_1,\mu_2}[f]({\bf w})e^{j(c_2w_2^2+e_2w_2)}
\end{equation}
\end{theorem}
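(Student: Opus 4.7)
The proof is essentially a rearrangement of the defining integral, exploiting the fact that the left kernel lives in the subalgebra $\langle 1,i\rangle$ and the right kernel lives in $\langle 1,j\rangle$. I would proceed as follows.

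First I would split each quadratic-phase exponential into three commuting pieces. Because every factor in $e^{-i(a_1x_1^2+b_1x_1w_1+c_1w_1^2+d_1x_1+e_1w_1)}$ uses only the imaginary unit $i$, these exponentials commute among themselves, so I can write
\begin{equation*}
e^{-i(a_1x_1^2+b_1x_1w_1+c_1w_1^2+d_1x_1+e_1w_1)}
= e^{-i(c_1w_1^2+e_1w_1)}\, e^{-ib_1x_1w_1}\, e^{-i(a_1x_1^2+d_1x_1)},
\end{equation*}
and similarly in the $j$-exponential on the right, with the $x$-only chirp placed adjacent to $f$ and the $w$-only chirp placed on the outer side. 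The scalar normalizations $\sqrt{b_1i/2\pi}$ and $\sqrt{b_2j/2\pi}$ live in $\langle 1,i\rangle$ and $\langle 1,j\rangle$ respectively, so they commute freely with the $i$- and $j$-valued exponentials on their own sides.

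Next, I would insert these factorizations into the definition \eqref{eqn q-qpft} of $\mathbb Q^{\mathbb H}_{\mu_1,\mu_2}[f]({\bf w})$. The factors $e^{-i(c_1w_1^2+e_1w_1)}$ and $\sqrt{b_1i/2\pi}$ do not depend on ${\bf x}$ and sit on the extreme left of the integrand, so they come outside the integral on the left; the analogous $j$-terms come out on the right. The remaining $x$-only chirps $e^{-i(a_1x_1^2+d_1x_1)}$ and $e^{-j(a_2x_2^2+d_2x_2)}$ sit immediately next to $f({\bf x})$ and can be absorbed into the modified signal $\tilde f({\bf x})$ defined in the statement. After pulling $\sqrt{b_1i}$ past $e^{-ib_1x_1w_1}$ (same subalgebra) and $\sqrt{b_2j}$ past $e^{-jb_2x_2w_2}$, these square roots land exactly adjacent to $\tilde f$, producing $G_f({\bf x})=\sqrt{b_1i}\,\tilde f({\bf x})\,\sqrt{b_2j}$. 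Collecting everything gives
\begin{equation*}
\mathbb Q^{\mathbb H}_{\mu_1,\mu_2}[f]({\bf w})
= e^{-i(c_1w_1^2+e_1w_1)}\,\frac{1}{\sqrt{(2\pi)^2}}\!\int_{\mathbb R^2}\! e^{-ib_1x_1w_1}G_f({\bf x})e^{-jb_2x_2w_2}d{\bf x}\,e^{-j(c_2w_2^2+e_2w_2)}.
\end{equation*}
Multiplying on the left by $e^{i(c_1w_1^2+e_1w_1)}$ and on the right by $e^{j(c_2w_2^2+e_2w_2)}$ yields precisely the integral formula for $F({\bf w})$ together with the identity \eqref{fun F}.

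Finally I would recognize the right-hand side as a rescaled QFT: comparing with Definition \ref{qft}, the integral defining $F({\bf w})$ is exactly $\mathcal F^{\mathbb H}[G_f]$ evaluated at $(b_1w_1,b_2w_2)$, so $F({\bf w})=\mathcal F^{\mathbb H}[G_f]({\bf b w})$, equivalently $\mathcal F^{\mathbb H}[G_f]({\bf w})=F({\bf w}/{\bf b})$, which is the claim.

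The only delicate point — and the one I would flag as the main obstacle — is bookkeeping the non-commutativity: $i$ and $j$ do not commute, and $\sqrt{b_2j}$ cannot be moved past $f({\bf x})$ or past any $i$-exponential. The proof works only because the splitting places every $i$-factor strictly to the left of $f$ and every $j$-factor strictly to the right, so all rearrangements used are between elements of the same two-dimensional commutative subalgebra $\langle 1,i\rangle$ or $\langle 1,j\rangle$. Once this ordering discipline is maintained, the manipulations are routine.
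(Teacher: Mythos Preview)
Your proposal is correct and follows essentially the same route as the paper: split each kernel into a $w$-only chirp, the mixed $e^{-ib_sx_sw_s}$ term, and an $x$-only chirp, pull the $w$-only pieces outside the integral, absorb the $x$-only pieces into $\tilde f$, and identify the remaining integral as $\mathcal F^{\mathbb H}[G_f]({\bf bw})$. If anything you are more explicit than the paper about why the rearrangements are legal in the non-commutative setting; the paper simply performs the factorization without commenting on the $\langle 1,i\rangle$/$\langle 1,j\rangle$ subalgebra discipline you highlight.
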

\begin{proof}
From Definition \ref{def Q-QPFT}, we obtain
\begin{eqnarray*}
&&\mathbb Q^{\mathbb H}_{\mu_1,\mu_2}[f]({\bf w})\\
&&=\int_{\mathbb R^2}\sqrt{\frac{b_1i}{2\pi}}e^{-i(a_1x_1^2+b_1x_1w_1+c_1w_1^2+d_1x_1+e_1w_1)}f({\bf x})\\
&&\qquad\qquad\qquad \times \sqrt{\frac{b_2j}{2\pi}} e^{-j(a_2x_2^2+b_2x_2w_2+c_2w_2^2+d_2x_2+e_2w_2)}d{\bf x}\\
&&=\sqrt{\frac{b_1i}{2\pi}}e^{-i(c_1w_1^2+e_1w_1)}\int_{\mathbb R^2}e^{-ix_1b_1w_1}\tilde f({\bf x})d{\bf x}e^{-jx_2b_2w_2}\sqrt{\frac{b_2j}{2\pi}}e^{-j(c_2w_2^2+e_2w_2)}
\end{eqnarray*}
Then, multiplying both sides of the above equation by $e^{i(c_1w_1^2+e_1w_1)}e^{j(c_2w_2^2+e_2w_2)},$ yields
\begin{eqnarray*}
&&e^{i(c_1w_1^2+e_1w_1)}\mathbb Q^{\mathbb H}_{\mu_1,\mu_2}[f]({\bf w})e^{j(c_2w_2^2+e_2w_2)}\\
&&=\sqrt{\frac{b_1i}{2\pi}}\int_{\mathbb R^2}e^{-ix_1b_1w_1}\tilde f({\bf x})d{\bf x}e^{-jx_2b_2w_2}\sqrt{\frac{b_2j}{2\pi}}\\
&&={\frac{1}{\sqrt{(2\pi)^2}}}\int_{\mathbb R^2}e^{-ix_1b_1w_1}G_f({\bf x})d{\bf x}e^{-jx_2b_2w_2}{\frac{1}{\sqrt{(2\pi)^2}}}\\
&&=\mathcal F^{\mathbb H}[G_f](\bf{bw}),
\end{eqnarray*}
where $\bf{bw}=(b_1w_1,b_2w_2)$. This leads to the desired result.

\end{proof}
\begin{theorem}[Inversion formula]Let $\mathbb Q^{\mathbb H}_{\mu_1,\mu_2}[f]\in L^1(\mathbb R^2,\mathbb H), $ then every signal $f\in L^1(\mathbb R^2,\mathbb H)$ can be reconstructed back by the formula
\begin{equation}
f({\bf x})=\int_{\mathbb R^2}\overline{\Lambda(x_1,w_1)}\mathbb Q^{\mathbb H}_{\mu_1,\mu_2}[f](w)\overline{\Lambda(x_2,w_2)}d{\bf w}.
\end{equation}
\end{theorem}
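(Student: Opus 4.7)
My plan is to leverage the fundamental bridge established in Theorem \ref{relation}, which expresses the Q-QPFT of $f$ as a QFT of the auxiliary quaternion signal $G_f$ evaluated at the rescaled frequency ${\bf bw}=(b_1w_1,b_2w_2)$. Since the QFT already has a known inversion formula (\ref{eqn invqft}), inverting the Q-QPFT reduces to three essentially mechanical operations: (i) applying the inverse QFT to $G_f$, (ii) performing the change of variables $\eta_s=b_sw_s$ in the frequency integral (the hypothesis $\mathbb Q^{\mathbb H}_{\mu_1,\mu_2}[f]\in L^1(\mathbb R^2,\mathbb H)$ guarantees absolute convergence), and (iii) stripping off the chirp and scaling factors hidden in the definitions (\ref{func hf}) of $G_f$ and (\ref{fun F}) of $F$.

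\textbf{Key Steps.} First I would rewrite Theorem \ref{relation} in the form $\mathcal F^{\mathbb H}[G_f]({\bf bw})=e^{i(c_1w_1^2+e_1w_1)}\mathbb Q^{\mathbb H}_{\mu_1,\mu_2}[f]({\bf w})\,e^{j(c_2w_2^2+e_2w_2)}$. Applying the inverse QFT (\ref{eqn invqft}) to $G_f$ and changing variables $\eta_s=b_sw_s$ (with Jacobian $|b_1b_2|$) yields
\begin{equation*}
G_f({\bf x})=\frac{|b_1b_2|}{2\pi}\int_{\mathbb R^2}e^{i(b_1x_1w_1+c_1w_1^2+e_1w_1)}\,\mathbb Q^{\mathbb H}_{\mu_1,\mu_2}[f]({\bf w})\,e^{j(b_2x_2w_2+c_2w_2^2+e_2w_2)}\,d{\bf w}.
\end{equation*}
Next I would unwind $G_f({\bf x})=\sqrt{b_1i}\,\tilde f({\bf x})\sqrt{b_2j}$ by multiplying on the left by $(\sqrt{b_1i})^{-1}=\overline{\sqrt{b_1i}}/|b_1|$ and on the right by $(\sqrt{b_2j})^{-1}=\overline{\sqrt{b_2j}}/|b_2|$, and then strip off the remaining chirps $e^{-i(a_1x_1^2+d_1x_1)}$ and $e^{-j(a_2x_2^2+d_2x_2)}$ that relate $\tilde f$ to $f$. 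Using $\overline{\sqrt{b_si}}=\sqrt{2\pi}\,\overline{\sqrt{b_si/(2\pi)}}$, the scalar prefactor $|b_1b_2|/(2\pi|b_1||b_2|)$ combines with the two $\sqrt{2\pi}$ factors to collapse to exactly $1$, and the integrand reassembles into $\overline{\Lambda^i_{\mu_1}(x_1,w_1)}\,\mathbb Q^{\mathbb H}_{\mu_1,\mu_2}[f]({\bf w})\,\overline{\Lambda^j_{\mu_2}(x_2,w_2)}$, which is the stated formula.

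\textbf{Main Obstacle.} The principal subtlety lies in the non-commutative bookkeeping inside $\mathbb H$. The prefactor $\overline{\sqrt{b_1i}}$ and every $i$-exponential belong to the commutative subalgebra generated by $i$, so they slide past one another freely, but they do \emph{not} commute with the $j$-valued material lying to the right of $\mathbb Q^{\mathbb H}_{\mu_1,\mu_2}[f]({\bf w})$; a symmetric statement holds for the right-acting factor $\overline{\sqrt{b_2j}}$ and the $j$-exponentials. Consequently, each multiplication used to isolate $f$ must be applied from the correct side, and the left--middle--right ordering of factors inside the integrand must be preserved at every step. Once this ordering discipline is respected, the algebraic collapse of constants is routine and the inversion formula follows.
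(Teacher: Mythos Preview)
Your proposal is correct and follows essentially the same route as the paper: invoke Theorem \ref{relation} to view $\mathbb Q^{\mathbb H}_{\mu_1,\mu_2}[f]$ as a rescaled QFT of $G_f$, apply the inverse QFT (\ref{eqn invqft}), perform the change of variables $\eta_s=b_sw_s$, and then strip off the chirps and the $\sqrt{b_s i},\sqrt{b_s j}$ factors from the correct sides to recover $f$ and reassemble the conjugate kernels. Your explicit attention to the left/right placement of the $i$- and $j$-subalgebra factors is a point the paper leaves implicit, but otherwise the argument is the same.
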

\begin{proof}
By the application of the inversion formula of QFT, we have
\begin{eqnarray*}
G_f({\bf x})
&=&\frac{1}{\sqrt{(2\pi)^2}}\int_{\mathbf R^2}e^{ix_1w_1}\mathcal F^{\mathbb H}[G_f]({\bf w})e^{jx_2w_2}d{\bf w}\\
&=&\frac{1}{\sqrt{(2\pi)^2}}\int_{\mathbf R^2}e^{ix_1w_1}F\left({\bf\frac{w} {b}}\right)e^{jx_2w_2}d{\bf w}\\
\end{eqnarray*}
On setting $\bf{w=bw},$ above equation yields
\begin{eqnarray}
\label{26}G_f({\bf x})&=&\frac{b_1b_2}{\sqrt{(2\pi)^2}}\int_{\mathbf R^2}e^{ix_1b_1w_1}F({\bf w})e^{jx_2b_2w_2}d{\bf w}\\
\nonumber &=&\frac{b_1b_2}{\sqrt{(2\pi)^2}}\int_{\mathbf R^2}e^{ix_1b_1w_1}e^{i(c_1w_1^2+e_1w_1)}\mathbb Q^{\mathbb H}_{\mu_1,\mu_2}[f]({\bf w})e^{j(c_2w_2^2+e_2w_2)}e^{jx_2b_2w_2}d{\bf w}.
\end{eqnarray}
Which implies
\begin{eqnarray}
\nonumber&&\sqrt{b_1i}e^{-i(a_1x_1^2+d_1x_1)}f({\bf x})e^{-j(a_2x_2^2+d_2x_2)}\sqrt{b_2j}\\
\label{27}&&=\frac{b_1b_2}{\sqrt{(2\pi)^2}}\int_{\mathbf R^2}e^{i(b_1x_1w_1+c_1w_1^2+e_1w_1)}\mathbb Q^{\mathbb H}_{\mu_1,\mu_2}[f]({\bf w})e^{j(b_2x_2w_2+c_2w_2^2+e_2w_2)}d{\bf w}.
\end{eqnarray}
On further simplifying, we have
\begin{eqnarray*}
&&f({\bf x})\\
&&=\int_{\mathbf R^2}\sqrt{\frac{b_1}{{2\pi i}}}e^{i(a_1x_1^2+b_1x_1w_1+c_1w_1^2+d_1x_1+e_1w_1)}\mathbb Q^{\mathbb H}_{\mu_1,\mu_2}[f]({\bf w})\\
&&\qquad\qquad\qquad\times\sqrt{\frac{b_2}{{2\pi j}}}e^{j(a_2x_2^2b_2x_2w_2+c_2w_2^2+d_2x_2+e_2w_2)}d{\bf w}\\
&&=\int_{\mathbb R^2}\overline{\Lambda(x_1,w_1)}\mathbb Q^{\mathbb H}_{\mu_1,\mu_2}[f](w)\overline{\Lambda(x_2,w_2)}d{\bf w}.
\end{eqnarray*}
Which completes the proof.
\end{proof}
\begin{theorem}[Parseval's formula] Let $f,g\in L^1(\mathbb R^2,\mathbb H)\cap L^2(\mathbb R^2,\mathbb H),$ be two quaternion signals, then we have
\begin{equation}\label{29}
\langle f,g\rangle_{L^2(\mathbb R^2,\mathbb H)}=\left\langle\mathbb Q^{\mathbb H}_{\mu_1,\mu_2}[f],\mathbb Q^{\mathbb H}_{\mu_1,\mu_2}[g]\right\rangle_{L^2(\mathbb R^2,\mathbb H)}.
\end{equation}
For $f=g,$ we have
\begin{equation}\label{30}
\|f\|^2_{L^2(\mathbb R^2,\mathbb H)}=\|\mathbb Q^{\mathbb H}_{\mu_1,\mu_2}[f]\|^2_{L^2(\mathbb R^2,\mathbb H)}.
\end{equation}
\end{theorem}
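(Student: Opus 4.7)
The plan is to deduce Parseval for the Q-QPFT by transferring everything to the QFT via the bridge formula proved in Theorem~\ref{relation} and then invoking Lemma~\ref{lem par qft}. Recall that $G_f(\mathbf{x}) = \sqrt{b_1 i}\,\tilde f(\mathbf{x})\sqrt{b_2 j}$ with $\tilde f(\mathbf{x}) = e^{-i(a_1 x_1^2 + d_1 x_1)} f(\mathbf{x}) e^{-j(a_2 x_2^2 + d_2 x_2)}$, and that $\mathcal F^{\mathbb H}[G_f](\mathbf{w}) = F(\mathbf{w}/\mathbf{b})$ where $F(\mathbf{w}) = e^{i(c_1 w_1^2 + e_1 w_1)}\,\mathbb Q^{\mathbb H}_{\mu_1,\mu_2}[f](\mathbf{w})\,e^{j(c_2 w_2^2 + e_2 w_2)}$. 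The strategy is therefore: (i) compute $\langle G_f, G_g\rangle$ in terms of $\langle f,g\rangle$; (ii) apply QFT Parseval to $G_f, G_g$; (iii) perform the change of variables $\mathbf{u} = \mathbf{w}/\mathbf{b}$ and strip off the unimodular chirps to land on $\langle \mathbb Q^{\mathbb H}_{\mu_1,\mu_2}[f], \mathbb Q^{\mathbb H}_{\mu_1,\mu_2}[g]\rangle$.

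For step (i), I would unfold $G_f \overline{G_g}$ as $\sqrt{b_1 i}\,\tilde f \,\sqrt{b_2 j}\,\overline{\sqrt{b_2 j}}\,\overline{\tilde g}\,\overline{\sqrt{b_1 i}}$, use that $|\sqrt{b_s j_s}|^2 = |b_s|$, and then take the symmetric scalar part \eqref{sym int} and apply the cyclic identity \eqref{cyclic sym} to slide $\overline{\sqrt{b_1 i}}$ to the front and annihilate it with $\sqrt{b_1 i}$. The same cyclic trick removes the chirps $e^{\mp i(a_1 x_1^2 + d_1 x_1)}$ inside $\tilde f \overline{\tilde g}$ (the $j$-chirps cancel directly between $\tilde f$ and $\overline{\tilde g}$ since they meet each other). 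This yields $[G_f \overline{G_g}]_0 = |b_1 b_2|\,[f \overline{g}]_0$ pointwise, hence $\langle G_f, G_g\rangle = |b_1 b_2|\,\langle f,g\rangle$.

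For step (iii), after QFT Parseval writes $\langle G_f, G_g\rangle = \langle \mathcal F^{\mathbb H}[G_f], \mathcal F^{\mathbb H}[G_g]\rangle = \int_{\mathbb R^2}[F(\mathbf{w}/\mathbf{b})\overline{F_g(\mathbf{w}/\mathbf{b})}]_0\,d\mathbf{w}$, I substitute $\mathbf{u} = \mathbf{w}/\mathbf{b}$, producing the Jacobian $|b_1 b_2|$. The integrand becomes $[e^{i(c_1 u_1^2 + e_1 u_1)} \mathbb Q^{\mathbb H}_{\mu_1,\mu_2}[f](\mathbf{u}) \overline{\mathbb Q^{\mathbb H}_{\mu_1,\mu_2}[g](\mathbf{u})} e^{-i(c_1 u_1^2 + e_1 u_1)}]_0$ after the $j$-phases cancel centrally, and the remaining $i$-phases cancel by one more use of cyclic symmetry. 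Thus $\langle G_f, G_g\rangle = |b_1 b_2|\,\langle \mathbb Q^{\mathbb H}_{\mu_1,\mu_2}[f], \mathbb Q^{\mathbb H}_{\mu_1,\mu_2}[g]\rangle$. Equating the two expressions for $\langle G_f, G_g\rangle$ and dividing by $|b_1 b_2|$ yields \eqref{29}, and specializing to $f = g$ gives \eqref{30}.

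The main obstacle is purely algebraic: because $\mathbb H$ is noncommutative, the chirps and square-root factors sit on both sides of $f$, so one cannot just pull them out of the inner product as moduli. The key is to consistently convert the quaternion inner product to its symmetric scalar form \eqref{sym int} and then use the cyclic property \eqref{cyclic sym} to move each offending factor next to its conjugate, where it collapses to a real scalar. Once this bookkeeping is done correctly, everything reduces to the already-established QFT Parseval.
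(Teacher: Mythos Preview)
Your proposal is correct and follows essentially the same route as the paper: compute $\langle G_f,G_g\rangle$ two ways, once directly in the spatial domain and once via QFT Parseval followed by the change of variables and Theorem~\ref{relation}, using the scalar-part inner product \eqref{sym int} and cyclic symmetry \eqref{cyclic sym} to collapse the chirps and $\sqrt{b_s}$ factors on each side, and then cancel the common factor $|b_1b_2|$. Your write-up is in fact slightly more explicit than the paper's about \emph{why} the cyclic trick is needed (to slide the outer $i$-factors past the quaternion-valued integrand), but the argument is the same.
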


\begin{proof}
By the Parseval's formula for the QFT and (\ref{cyclic sym}), we have
\begin{eqnarray}
\nonumber\langle G_f,G_g\rangle&=&\langle\mathcal F^{\mathbb H}[ G_f],\mathcal F^{\mathbb H}[ G_g]\rangle\\
\nonumber&=&\left[\int_{\mathbb R^2}\mathcal F^{\mathbb H}[ G_f]({\bf w})\overline{\mathcal F^{\mathbb H}[ G_g]({\bf w})}d{\bf w}\right]_0\\
\nonumber&=&|b_1b_2|\left[\int_{\mathbb R^2}\mathcal F^{\mathbb H}[ G_f]({\bf bw})\overline{\mathcal F^{\mathbb H}[ G_g]({\bf bw})}d{\bf w}\right]_0\\
\nonumber&=&|b_1b_2|\left[\int_{\mathbb R^2}e^{i(c_1w_1^2+e_1w_1)}\mathbb Q^{\mathbb H}_{\mu_1,\mu_2}[f]({\bf w})e^{j(c_2w_2^2+e_2w_2)}\right.\\
\nonumber&&\qquad\qquad\qquad\qquad\times\left.\overline{e^{i(c_1w_1^2+e_1w_1)}\mathbb Q^{\mathbb H}_{\mu_1,\mu_2}[g]({\bf w})e^{j(c_2w_2^2+e_2w_2)}}d{\bf w}\right]_0\\
\label{p1}&=&|b_1b_2|\left[\int_{\mathbb R^2}\mathbb Q^{\mathbb H}_{\mu_1,\mu_2}[f]({\bf w})\overline{\mathbb Q^{\mathbb H}_{\mu_1,\mu_2}[g]({\bf w})}d{\bf w}\right]_0.
\end{eqnarray}
Again, we have
\begin{eqnarray}
\nonumber\langle G_f,G_g\rangle&=&\left[\int_{\mathbb R^2}G_f({\bf x})\overline{G_g({\bf x})}d{\bf x}\right]_0\\
\nonumber&=&\left[\int_{\mathbb R^2}\sqrt{b_1i}\tilde f({\bf x})\sqrt{b_2j}\overline{\sqrt{b_1i}\tilde g({\bf x})\sqrt{b_2j}}d{\bf x}\right]_0\\
\nonumber&=&\left[\int_{\mathbb R^2}|b_1b_2|\tilde f({\bf x})\overline{\tilde g({\bf x})}d{\bf x}\right]_0\\
\nonumber&=&|b_1b_2|\left[\int_{\mathbb R^2}e^{-i(a_1x_1^2+d_1x_1)}f({\bf x})e^{j(a_2x_2^2+d_2x_2)}\overline{e^{-i(a_1x_1^2+d_1x_1)} g({\bf x})e^{j(a_2x_2^2+d_2x_2)}}d{\bf x}\right]_0\\
\label{p2}&=&|b_1b_2|\left[\int_{\mathbb R^2}f({\bf x})\overline{ g({\bf x})}d{\bf x}\right]_0.
\end{eqnarray}
On comparing (\ref{p1}) and  (\ref{p2}), we get the desired the result.\\
\end{proof}
\begin{theorem}[Linearity property]Let $f,g\in L^2(\mathbb R^2,\mathbb H),$ then Q-QPFT is a linear operator namely
\begin{equation}
\mathbb Q^{\mathbb H}_{\mu_1,\mu_2}[\alpha f+\beta g]({\bf w})=\mathbb Q^{\mathbb H}_{\mu_1,\mu_2}[\alpha f]({\bf w})+\mathbb Q^{\mathbb H}_{\mu_1,\mu_2}[\beta g]({\bf w}),
\end{equation}
for arbitrary real constants $\alpha$ and $\beta$.
\end{theorem}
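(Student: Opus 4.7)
The plan is to unfold the definition of the two-sided Q-QPFT given in \eqref{eqn q-qpft} on the input $\alpha f + \beta g$ and then exploit two elementary facts: linearity of the Lebesgue integral, and the fact that the scalars $\alpha,\beta\in\mathbb{R}$ lie in the centre of $\mathbb{H}$, so they commute with every quaternion (in particular with the kernels $\Lambda^{i}_{\mu_1}$ on the left and $\Lambda^{j}_{\mu_2}$ on the right). This commutativity is what makes the argument work cleanly despite the non-commutativity of $\mathbb{H}$; if $\alpha,\beta$ were genuinely quaternionic, one could not pull them through the right-hand kernel $\Lambda^{j}_{\mu_2}$.

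Concretely, I would first write
\[
\mathbb{Q}^{\mathbb{H}}_{\mu_1,\mu_2}[\alpha f+\beta g]({\bf w})
=\int_{\mathbb{R}^2}\Lambda^{i}_{\mu_1}(x_1,w_1)\bigl(\alpha f({\bf x})+\beta g({\bf x})\bigr)\Lambda^{j}_{\mu_2}(x_2,w_2)\,d{\bf x}.
\]
Next I would distribute the sum inside the integrand and split the integral into two pieces, which is justified because $f,g\in L^{2}(\mathbb{R}^2,\mathbb{H})$ ensures integrability of each summand against the bounded kernels. Finally, since $\alpha,\beta\in\mathbb{R}$ commute with the quaternion kernels, each resulting piece may be reorganised as $\Lambda^{i}_{\mu_1}(x_1,w_1)\,(\alpha f)({\bf x})\,\Lambda^{j}_{\mu_2}(x_2,w_2)$ and similarly for $g$, which are exactly the integrands defining $\mathbb{Q}^{\mathbb{H}}_{\mu_1,\mu_2}[\alpha f]$ and $\mathbb{Q}^{\mathbb{H}}_{\mu_1,\mu_2}[\beta g]$.

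There is no real obstacle here; the only point worth flagging, as above, is the centrality of the real scalars in $\mathbb{H}$, which is what permits the scalar factors to be absorbed into either argument of the two-sided transform without reordering. Everything else is a direct appeal to the linearity of $\int$.
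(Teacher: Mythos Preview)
Your proposal is correct and coincides with the paper's approach: the paper simply states that the proof follows directly from Definition~\ref{def Q-QPFT} and omits the details, which is precisely what you have spelled out by invoking the linearity of the integral together with the centrality of real scalars in $\mathbb{H}$.
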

\begin{proof}
We omit proof as it follows from Definition \ref{def Q-QPFT}.
\end{proof}

\begin{theorem}[Shift property]For any  quaternion signal $f\in L^2(\mathbb R^2,\mathbb H)$  and for ${\bf k}\in \mathbb R^2,$ we have
\begin{eqnarray}
\nonumber&&\mathbb Q^{\mathbb H}_{\mu_1,\mu_2}[f(\bf{x-k})]({\bf w})\\
\nonumber&&=e^{-i\left[a_1k_1^2+d_1k_1+b_1k_1w_1-4\frac{a^2_1c_1}{b_1^2}k_1^2-4\frac{a_1}{b_1}c_1w_1k_1 -2\frac{a_1}{b_1}k_1 \right]}\mathbb Q^{\mathbb H}_{\mu_1,\mu_2}[f]\left({\bf w+2\frac{a}{b}k}\right)\\
&&\times e^{-j\left[a_2k_2^2+d_2k_2+b_2k_2w_2-4\frac{a^2_2c_2}{b_2^2}k_2^2-4\frac{a_2}{b_2}c_2w_2k_2 -2\frac{a_2}{b_2}k_2 \right]}
\end{eqnarray}
\end{theorem}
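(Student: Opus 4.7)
The plan is to start directly from the definition of the Q-QPFT applied to the shifted signal $f(\mathbf{x}-\mathbf{k})$ and then perform the change of variables $\mathbf{y}=\mathbf{x}-\mathbf{k}$, i.e., $y_1=x_1-k_1$, $y_2=x_2-k_2$, with trivial Jacobian. Since the $x_1$-dependence of the integrand sits entirely in the left kernel $\Lambda^i_{\mu_1}(x_1,w_1)$ (to the left of $f$) and the $x_2$-dependence sits entirely in the right kernel $\Lambda^j_{\mu_2}(x_2,w_2)$ (to the right of $f$), the substitution preserves the sandwich structure and the noncommutative ordering that is essential when manipulating quaternion-valued integrands.

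The next step is to expand the two quadratic-phase exponents after substitution. In the left exponent I would write
\[
a_1(y_1+k_1)^2+b_1(y_1+k_1)w_1+c_1w_1^2+d_1(y_1+k_1)+e_1w_1,
\]
and analogously on the right. The goal is to reorganize this expression so that, as a polynomial in $y_1$ and $w_1$, it becomes the original quadratic-phase exponent evaluated at the shifted frequency $w_1'=w_1+\tfrac{2a_1}{b_1}k_1$, plus a leftover term that is independent of $y_1$. The key algebraic move is to recognize that the cross term $2a_1y_1k_1$ combines with $b_1y_1w_1$ to produce $b_1y_1w_1'$, which fixes the shift in the frequency variable. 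Once the shift is identified, the remaining purely $w_1$- and $k_1$-dependent residue is obtained by using the identity $c_1w_1^2+e_1w_1=c_1(w_1')^2+e_1w_1'-\bigl[c_1((w_1')^2-w_1^2)+e_1(w_1'-w_1)\bigr]$ and simplifying the difference with $w_1'-w_1=\tfrac{2a_1k_1}{b_1}$. An identical computation is done on the $j$-side for the right kernel.

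After this reorganization, the integrand factors as a $y_1,y_2$-dependent Q-QPFT kernel at the shifted frequency $\bigl(w_1+\tfrac{2a_1}{b_1}k_1,\,w_2+\tfrac{2a_2}{b_2}k_2\bigr)$ sandwiching $f(\mathbf{y})$, times two purely $\mathbf{w},\mathbf{k}$-dependent exponentials. Here it is crucial to place the $i$-exponential on the \emph{left} of the Q-QPFT and the $j$-exponential on the \emph{right}, because the former comes from the left kernel and the latter from the right kernel, and in $\mathbb{H}$ these factors cannot be moved past $f$ or past each other. Reading off the residues gives exactly the two exponential prefactors $e^{-i[\,\cdots\,]}$ and $e^{-j[\,\cdots\,]}$ appearing in the statement, and the interior integral is, by Definition~\ref{def Q-QPFT}, $\mathbb{Q}^{\mathbb{H}}_{\mu_1,\mu_2}[f]\bigl(\mathbf{w}+\tfrac{2\mathbf a}{\mathbf b}\mathbf k\bigr)$.

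The only real obstacle is bookkeeping: keeping the $i$-terms, the $j$-terms, and the quaternion $f(\mathbf{y})$ in the correct order at every step, and carefully completing the square in $w_1$ (and $w_2$) to match the exact coefficients stated in the theorem. Once the algebra on each side is carried out separately, the two computations combine automatically by the sandwich structure of the two-sided transform.
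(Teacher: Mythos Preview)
Your proposal is correct and follows essentially the same route as the paper: start from Definition~\ref{def Q-QPFT}, substitute $\mathbf{y}=\mathbf{x}-\mathbf{k}$, expand each quadratic-phase exponent, regroup so that the $y_s$-dependent part is the original kernel evaluated at the shifted frequency $w_s+\tfrac{2a_s}{b_s}k_s$, and pull the leftover $i$-exponential to the left and the $j$-exponential to the right. Your additional remarks on preserving the sandwich ordering are more explicit than the paper's presentation but do not change the argument.
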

\begin{proof}
We have from (\ref{eqn q-qpft})
\begin{eqnarray*}
&&\mathbb Q^{\mathbb H}_{\mu_1,\mu_2}[f(\bf{x-k})]({\bf w})\\
&&=\int_{\mathbb R^2}\sqrt{\frac{b_1i}{2\pi}}e^{-i(a_1x_1^2+b_1x_1w_1+c_1w_1^2+d_1x_1+e_1w_1)}f(\bf{x-k})\\
&&\qquad\qquad\qquad \times \sqrt{\frac{b_2j}{2\pi}} e^{-j(a_2x_2^2+b_2x_2w_2+c_2w_2^2+d_2x_2+e_2w_2)}d{\bf x}
\end{eqnarray*}
By making the change of a variable $\bf{x-k=y}$, above equation yields
\begin{eqnarray*}
&&\mathbb Q^{\mathbb H}_{\mu_1,\mu_2}[f(\bf{x-k})]({\bf w})\\
&&=\int_{\mathbb R^2}\sqrt{\frac{b_1i}{2\pi}}e^{-i[a_1(y_1+k_1)^2+b_1(y_1+k_1)w_1+c_1w_1^2+d_1(y_1+k_1)+e_1w_1]}f({\bf y})\\
&&\qquad\qquad\qquad \times \sqrt{\frac{b_2j}{2\pi}} e^{-j[a_2(y_2+k_2)^2+b_2(y_2+k_2)w_2+c_2w_2^2+d_2(y_2+k_2)+e_2w_2)}d{\bf y}\\
&&=\int_{\mathbb R^2}\sqrt{\frac{b_1i}{2\pi}}e^{-i\left[a_1y_1^2+b_1\left(w_1+2\frac{a_1}{b_1}k_1\right)y_1+c_1\left(w_1+2\frac{a_1}{b_1}k_1\right)^2+d_1y_1+e_1\left(w_1+2\frac{a_1}{b_1}k_1\right)\right]}\\
&&\qquad\qquad\times e^{-i\left[a_1k_1^2+d_1k_1+b_1k_1w_1-4\frac{a^2_1c_1}{b_1^2}k_1^2-4\frac{a_1}{b_1}c_1w_1k_1 -2\frac{a_1}{b_1}k_1 \right]}f({\bf y})\\
&&\qquad\qquad\times\sqrt{\frac{b_2j}{2\pi}}e^{-j\left[a_2y_2^2+b_2\left(w_2+2\frac{a_2}{b_2}k_2\right)y_2+c_2\left(w_2+2\frac{a_2}{b_2}k_2\right)^2+d_2y_2+e_2\left(w_2+2\frac{a_2}{b_2}k_2\right)\right]}\\
&&\qquad\qquad\times e^{-j\left[a_2k_2^2+d_2k_2+b_2k_2w_2-4\frac{a^2_2c_2}{b_2^2}k_2^2-4\frac{a_2}{b_2}c_2w_2k_2 -2\frac{a_2}{b_2}k_2 \right]}d{\bf y}\\\\
&&=e^{-i\left[a_1k_1^2+d_1k_1+b_1k_1w_1-4\frac{a^2_1c_1}{b_1^2}k_1^2-4\frac{a_1}{b_1}c_1w_1k_1 -2\frac{a_1}{b_1}k_1 \right]}\\
&&\qquad\times \int_{\mathbb R^2}\sqrt{\frac{b_1i}{2\pi}}e^{-i\left[a_1y_1^2+b_1\left(w_1+2\frac{a_1}{b_1}k_1\right)y_1+c_1\left(w_1+2\frac{a_1}{b_1}k_1\right)^2+d_1y_1+e_1\left(w_1+2\frac{a_1}{b_1}k_1\right)\right]}f({\bf y})\\
&&\qquad\qquad\times\sqrt{\frac{b_2j}{2\pi}}e^{-j\left[a_2y_2^2+b_2\left(w_2+2\frac{a_2}{b_2}k_2\right)y_2+c_2\left(w_2+2\frac{a_2}{b_2}k_2\right)^2+d_2y_2+e_2\left(w_2+2\frac{a_2}{b_2}k_2\right)\right]}d{\bf y}\\
&&\qquad\qquad\times e^{-j\left[a_2k_2^2+d_2k_2+b_2k_2w_2-4\frac{a^2_2c_2}{b_2^2}k_2^2-4\frac{a_2}{b_2}c_2w_2k_2 -2\frac{a_2}{b_2}k_2 \right]}\\\\
&&=e^{-i\left[a_1k_1^2+d_1k_1+b_1k_1w_1-4\frac{a^2_1c_1}{b_1^2}k_1^2-4\frac{a_1}{b_1}c_1w_1k_1 -2\frac{a_1}{b_1}k_1 \right]}\\
&&\qquad\times\mathbb Q^{\mathbb H}_{\mu_1,\mu_2}[f]\left(w_1+2\frac{a_1}{b_1}k_1,w_2+2\frac{a_2}{b_2}k_2\right)\\
&&\times e^{-j\left[a_2k_2^2+d_2k_2+b_2k_2w_2-4\frac{a^2_2c_2}{b_2^2}k_2^2-4\frac{a_2}{b_2}c_2w_2k_2 -2\frac{a_2}{b_2}k_2 \right]}.
\end{eqnarray*}
Which completes the proof.
\end{proof}

\begin{theorem}[Modulation property]The quaternion quadratic-phase Fourier transform (\ref{eqn q-qpft}) of a modulated  quaternion signal $\mathcal M_{\bf w_0}f({\bf x})=e^{ix_1u_0}f({\bf x})e^{jx_2v_0}, {\bf w_0}=(u_0,v_0)$ is given by
\begin{eqnarray}
\nonumber\mathbb Q^{\mathbb H}_{\mu_1,\mu_2}[\mathcal M_{\bf w_0}f]({\bf w})&=&e^{i\left[\frac{c_1{u_0^2}-2b_1c_1u_0w_1-b_1e_1u_0}{b_1^2}\right]}\mathbb Q^{\mathbb H}_{\mu_1,\mu_2}[f]\left({\bf w-\frac{w_0}{b}}\right)\\
&&\qquad\qquad\times e^{j\left[\frac{c_2{v_0^2}-2b_2c_2v_0w_2-b_2e_2v_0}{b_2^2}\right]}.
\end{eqnarray}
\end{theorem}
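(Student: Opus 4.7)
The plan is to substitute $\mathcal M_{\bf w_0}f({\bf x})=e^{ix_1u_0}f({\bf x})e^{jx_2v_0}$ directly into the definition (\ref{eqn q-qpft}) and then manipulate the two modulation phases into the two quadratic-phase kernels on either side of $f$. Explicitly, I would write
$$\mathbb Q^{\mathbb H}_{\mu_1,\mu_2}[\mathcal M_{\bf w_0}f]({\bf w})=\int_{\mathbb R^2}\Lambda^i_{\mu_1}(x_1,w_1)\,e^{ix_1u_0}\,f({\bf x})\,e^{jx_2v_0}\,\Lambda^j_{\mu_2}(x_2,w_2)\,d{\bf x},$$
and then treat the two sides independently.

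On the $i$-side, the factor $e^{ix_1u_0}$ involves only the imaginary unit $i$ and so commutes with $\Lambda^i_{\mu_1}(x_1,w_1)$. I would therefore absorb it into the left kernel's exponent, producing a new linear-in-$x_1$ term $-i(b_1w_1-u_0)x_1=-ib_1x_1(w_1-u_0/b_1)$. The next step is the only non-trivial calculation: complete the square so that the resulting exponent becomes the kernel $\Lambda^i_{\mu_1}(x_1,w_1-u_0/b_1)$ up to an $x_1$-independent phase. The identity
$$c_1w_1^2+e_1w_1=c_1\!\left(w_1-\tfrac{u_0}{b_1}\right)^{\!2}+e_1\!\left(w_1-\tfrac{u_0}{b_1}\right)+\frac{2b_1c_1u_0w_1+b_1e_1u_0-c_1u_0^2}{b_1^2}$$
shows that the difference is exactly $-\phi_1(w_1,u_0)$, where $\phi_1(w_1,u_0)=(c_1u_0^2-2b_1c_1u_0w_1-b_1e_1u_0)/b_1^2$. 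Since $\phi_1$ depends only on $w_1$ and $u_0$, the factor $e^{i\phi_1(w_1,u_0)}$ commutes with $\Lambda^i_{\mu_1}$ and may be pulled out to the far left of the integral.

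The $j$-side is handled by the exact mirror argument: $e^{jx_2v_0}$ commutes with $\Lambda^j_{\mu_2}(x_2,w_2)$, absorbing it shifts the kernel's $w_2$-argument to $w_2-v_0/b_2$, and completing the square in $c_2w_2^2+e_2w_2$ yields an $x_2$-independent $j$-phase $e^{j\phi_2(w_2,v_0)}$ with $\phi_2(w_2,v_0)=(c_2v_0^2-2b_2c_2v_0w_2-b_2e_2v_0)/b_2^2$, which can be pulled out to the far right (it commutes with $\Lambda^j_{\mu_2}$ and does not depend on $x_2$). What remains inside the integral is precisely $\Lambda^i_{\mu_1}(x_1,w_1-u_0/b_1)\,f({\bf x})\,\Lambda^j_{\mu_2}(x_2,w_2-v_0/b_2)$, i.e.\ $\mathbb Q^{\mathbb H}_{\mu_1,\mu_2}[f]\bigl({\bf w-w_0/b}\bigr)$, and collecting the two pulled-out phases gives exactly the stated formula.

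The main obstacle is the book-keeping of non-commutativity: one must verify at every step that the phase being moved involves only a single imaginary unit and is independent of ${\bf x}$, so that it legitimately slides past the relevant kernel and outside the integral without disturbing $f$ in the middle. Conceptually this is just the observation that the correction phase from completing the square on the $i$-side is a pure $i$-quantity depending only on $(w_1,u_0)$, and likewise on the $j$-side; once this is noted, the non-commutativity causes no trouble and the remaining work is the routine completion-of-square algebra outlined above.
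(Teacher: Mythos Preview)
Your proposal is correct and follows essentially the same direct-computation approach as the paper: substitute the modulated signal into Definition~\ref{def Q-QPFT}, absorb each modulation factor into the same-unit kernel to shift the linear term, then complete the square in the $c_sw_s^2+e_sw_s$ part to identify the kernel at the shifted frequency and extract the leftover $x$-independent phase. Your treatment is in fact more careful than the paper's about justifying the movement of phase factors past the quaternionic kernels, but the underlying argument is identical.
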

\begin{proof}
From Definition \ref{def Q-QPFT}, we get
\begin{eqnarray*}
\mathbb Q^{\mathbb H}_{\mu_1,\mu_2}[\mathcal M_{\bf w_0}f]({\bf w})
&=&\int_{\mathbb R^2}\sqrt{\frac{b_1i}{2\pi}}e^{-i(a_1x_1^2+b_1x_1w_1+c_1w_1^2+d_1x_1+e_1w_1)}e^{ix_1u_0}f({\bf x})e^{jx_2v_0}\\
&&\qquad \times \sqrt{\frac{b_2j}{2\pi}} e^{-j(a_2x_2^2+b_2x_2w_2+c_2w_2^2+d_2x_2+e_2w_2)}d{\bf x}\\
&=&\int_{\mathbb R^2}\sqrt{\frac{b_1i}{2\pi}}e^{-i\left[a_1x_1^2+b_1x_1\left(w_1-\frac{u_0}{b_1}\right)+c_1\left(w_1-\frac{u_0}{b_1}\right)^2+d_1x_1+e_1\left(w_1-\frac{u_0}{b_1}\right)\right]}\\
&&\qquad\times e^{i\left[c_1\frac{u_0^2}{b_1^2}-2c_1\frac{u_0}{b_1}w_1-e_1\frac{u_0}{b_1}\right]}f({\bf x})e^{j\left[c_2\frac{u_0^2}{b_2^2}-2c_2\frac{u_0}{b_2}w_2-e_2\frac{u_0}{b_2}\right]}\\
&&\qquad \sqrt{\frac{b_2j}{2\pi}} e^{-j\left[a_2x_2^2+b_2x_2\left(w_2-\frac{u_0}{b_2}\right)+c_2\left(w_2-\frac{u_0}{b_2}\right)^2+d_2x_2+e_2\left(w_2-\frac{u_0}{b_2}\right)\right]}d{\bf x}\\
&=&e^{i\left[c_1\frac{u_0^2}{b_1^2}-2c_1\frac{u_0}{b_1}w_1-e_1\frac{u_0}{b_1}\right]}\\
&&\quad\times\int_{\mathbb R^2}\sqrt{\frac{b_1i}{2\pi}}e^{-i\left[a_1x_1^2+b_1x_1\left(w_1-\frac{u_0}{b_1}\right)+c_1\left(w_1-\frac{u_0}{b_1}\right)^2+d_1x_1+e_1\left(w_1-\frac{u_0}{b_1}\right)\right]}f({\bf x})\\
&&\quad\times \sqrt{\frac{b_2j}{2\pi}} e^{-j\left[a_2x_2^2+b_2x_2\left(w_2-\frac{u_0}{b_2}\right)+c_2\left(w_2-\frac{u_0}{b_2}\right)^2+d_2x_2+e_2\left(w_2-\frac{u_0}{b_2}\right)\right]}d{\bf x}\\
&&\qquad\qquad\times e^{j\left[c_2\frac{u_0^2}{b_2^2}-2c_2\frac{u_0}{b_2}w_2-e_2\frac{u_0}{b_2}\right]}\\
&=&e^{i\left[c_1\frac{u_0^2}{b_1^2}-2c_1\frac{u_0}{b_1}w_1-e_1\frac{u_0}{b_1}\right]}\mathbb Q^{\mathbb H}_{\mu_1,\mu_2}[f]\left(w_1-\frac{u_0}{b_1},w_2-\frac{v_0}{b_2}\right)\\
&&\qquad\qquad\times e^{j\left[c_2\frac{u_0^2}{b_2^2}-2c_2\frac{u_0}{b_2}w_2-e_2\frac{u_0}{b_2}\right]}.
\end{eqnarray*}
Which completes the proof.
\end{proof}

We omit properties like Reflection, Conjugation and Scaling as they directly follows from Definition \ref{def Q-QPFT}.
\begin{theorem}[Hausdorff-Young]Let $1\le p\le 2$ and $\frac{1}{p}+\frac{1}{q}=1,$ then for all $f\in L^p(\mathbb R^2,\mathbb H)$ following inequality holds
\begin{eqnarray}
\|\mathbb Q^{\mathbb H}_{\mu_1,\mu_2}[f]\|_q&\le&(2\pi)^{\frac{1}{q}-\frac{1}{p}}|b_1b_2|^{\frac{1}{2}-\frac{1}{q}}\|f({\bf x})\|_p .
\end{eqnarray}
\end{theorem}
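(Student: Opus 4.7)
The plan is to deduce the bound from the classical Hausdorff-Young inequality for the two-sided QFT stated in Lemma \ref{lem young}, using Theorem \ref{relation} to convert Q-QPFT norms into QFT norms. Recall that Theorem \ref{relation} yields
\begin{equation*}
\mathcal F^{\mathbb H}[G_f](\mathbf{bw}) = e^{i(c_1w_1^2+e_1w_1)}\,\mathbb Q^{\mathbb H}_{\mu_1,\mu_2}[f]({\bf w})\,e^{j(c_2w_2^2+e_2w_2)},
\end{equation*}
where $G_f({\bf x})=\sqrt{b_1i}\,\tilde f({\bf x})\,\sqrt{b_2j}$ and $\tilde f({\bf x})=e^{-i(a_1x_1^2+d_1x_1)}f({\bf x})e^{-j(a_2x_2^2+d_2x_2)}$.

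First I would take quaternion moduli on both sides of this identity. Since $|pq|=|p||q|$ in $\mathbb H$ and the factors $e^{i(c_1w_1^2+e_1w_1)}$ and $e^{j(c_2w_2^2+e_2w_2)}$ have modulus one, this collapses to $|\mathcal F^{\mathbb H}[G_f](\mathbf{bw})|=|\mathbb Q^{\mathbb H}_{\mu_1,\mu_2}[f]({\bf w})|$. Raising to the $q$-th power, integrating over $\mathbb R^2$, and performing the linear change of variables ${\bf u}=\mathbf{bw}$ (so $d{\bf u}=|b_1b_2|\,d{\bf w}$) produces
\begin{equation*}
\|\mathbb Q^{\mathbb H}_{\mu_1,\mu_2}[f]\|_q=|b_1b_2|^{-1/q}\,\|\mathcal F^{\mathbb H}[G_f]\|_q.
\end{equation*}

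Next I would apply Lemma \ref{lem young} to $G_f$, obtaining $\|\mathcal F^{\mathbb H}[G_f]\|_q\le (2\pi)^{1/q-1/p}\|G_f\|_p$, and compute $\|G_f\|_p$ directly. Because $|\sqrt{b_s i}|=|b_s|^{1/2}$ for $s=1,2$ and $|\tilde f({\bf x})|=|f({\bf x})|$ (the linear and quadratic phase factors being unimodular), the multiplicative property of the quaternion norm gives $|G_f({\bf x})|=|b_1b_2|^{1/2}|f({\bf x})|$, hence $\|G_f\|_p=|b_1b_2|^{1/2}\|f\|_p$. Chaining the three estimates yields
\begin{equation*}
\|\mathbb Q^{\mathbb H}_{\mu_1,\mu_2}[f]\|_q\le |b_1b_2|^{-1/q}(2\pi)^{1/q-1/p}|b_1b_2|^{1/2}\|f\|_p=(2\pi)^{1/q-1/p}|b_1b_2|^{1/2-1/q}\|f\|_p,
\end{equation*}
which is the claimed inequality.

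The only delicate point will be the careful handling of the non-commutative quaternion product when taking moduli: I must rely exclusively on $|pq|=|p||q|$ and on the unimodularity of the pure-imaginary quaternion exponentials, rather than on any commutativity of the intervening factors. Once this bookkeeping is set up, the argument is essentially a change-of-variables and power-counting exercise, and no serious obstacle is anticipated beyond keeping the exponents of $|b_1b_2|$ straight.
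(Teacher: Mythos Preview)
Your proposal is correct and follows essentially the same approach as the paper: both arguments apply Lemma \ref{lem young} to $G_f$, use Theorem \ref{relation} together with the unimodularity of the quaternion phase factors to identify $|\mathcal F^{\mathbb H}[G_f](\mathbf{bw})|$ with $|\mathbb Q^{\mathbb H}_{\mu_1,\mu_2}[f](\mathbf{w})|$, and perform the change of variables $\mathbf{u}=\mathbf{bw}$ to extract the $|b_1b_2|^{-1/q}$ factor. The only difference is cosmetic ordering of the steps (you compute $\|\mathbb Q^{\mathbb H}_{\mu_1,\mu_2}[f]\|_q$ in terms of $\|\mathcal F^{\mathbb H}[G_f]\|_q$ first and then apply the lemma, whereas the paper applies the lemma first and then substitutes).
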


\begin{proof}
From Lemma \ref{lem young}, we have
\begin{eqnarray*}
\|\mathcal F^{\mathbb H}[f]({\bf w})\|_q&\le&(2\pi)^{\frac{1}{q}-\frac{1}{p}}\|f({\bf x})\|_p .
\end{eqnarray*}
Replacing $f$ by $G_f$, we have from above equation
\begin{eqnarray*}
\|\mathcal F^{\mathbb H}[G_f]({\bf w})\|_q&\le&(2\pi)^{\frac{1}{q}-\frac{1}{p}}\|G_f({\bf x})\|_p .
\end{eqnarray*}
With the help of equations present in Theorem \ref{relation}, above yields
\begin{eqnarray*}
\left\| F\left({\bf\frac {w}{b}}\right)\right\|_q&\le&(2\pi)^{\frac{1}{q}-\frac{1}{p}}\|\sqrt{b_1 i}\tilde f({\bf x})\sqrt{b_2 j}\|_p .
\end{eqnarray*}
On substituting ${\bf w=bw},$ we get
\begin{eqnarray*}
|b_1b_2|^{\frac{1}{q}}\left\| F({\bf w})\right\|_q&\le&(2\pi)^{\frac{1}{q}-\frac{1}{p}}\sqrt{b_1b_2}\|\tilde f({\bf x})\|_p .
\end{eqnarray*}
Again using Theorem \ref{relation}, we obtain
\begin{eqnarray*}
|b_1b_2|^{\frac{1}{q}}\left\| \mathbb Q^{\mathbb H}_{\mu_1,\mu_2}[f]({\bf w})\right\|_q&\le&(2\pi)^{\frac{1}{q}-\frac{1}{p}}\sqrt{b_1b_2}\| f({\bf x})\|_p .
\end{eqnarray*}
Further simplification yields
\begin{eqnarray*}
\left\| \mathbb Q^{\mathbb H}_{\mu_1,\mu_2}[f]({\bf w})\right\|_q&\le&(2\pi)^{\frac{1}{q}-\frac{1}{p}}|b_1b_2|^{\frac{1}{2}-\frac{1}{q}}\| f({\bf x})\|_p .
\end{eqnarray*}
Which completes the proof.
\end{proof}
\section{Uncertainty Principles Associated with the Quaternion-QPFT}\label{sec3}
In this section based on the fundamental relationship between Q-QPFT and QFT, we investigate some different forms of UPs associated with Q-QPFT including   Heisenberg UP, logarithmic UPs, Hardy's UP, Beurling’s UP, and Donoho-Stark’s UP.\\

Lets begin with the Heisenberg type uncertainty principle for the proposed transform (Q-QPFT), which is a generalization of the corresponding Heisenberg's uncertainty principle for the QFT.
\begin{theorem}[Heisenberg UP for the Q-QPFT]Let $\mathbb Q^{\mathbb H}_{\mu_1,\mu_2}[f]$  be the quaternion quadratic-phase Fourier transform of signal $f,$ then for  $f\in L^1(\mathbb R^2,\mathbb H)\cap L^2(\mathbb R^2,\mathbb H)$, $\partial f/\partial x_s \in L^2(\mathbb R^2,\mathbb H)$ and $\mathbb Q^{\mathbb H}_{\mu_1,\mu_2}[f],w_s\mathbb Q^{\mathbb H}_{\mu_1,\mu_2}[f]\in L^2(\mathbb R^2,\mathbb H), s=1,2.$ The following inequality holds:
\begin{equation}
\int_{\mathbb R^2} x_s^2\left| f({\bf x})\right|^2d{\bf x}\int_{\mathbb R^2}w_s^2|\mathbb Q^{\mathbb H}_{\mu_1,\mu_2}[f]({\bf  w})|^2d{\bf w}
\ge\frac{1}{4b_s^2}\left(\int_{\mathbb R^2}\left| f({\bf x})\right|^2\right)^2, s=1,2.
\end{equation}
\end{theorem}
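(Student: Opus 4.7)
The plan is to reduce the Heisenberg UP for the Q-QPFT to the classical Heisenberg UP for the two-sided QFT via the fundamental relationship established in Theorem \ref{relation}. That relationship rewrites the Q-QPFT of $f$ (up to unimodular chirps and a dilation) as the QFT of the auxiliary function $G_f({\bf x})=\sqrt{b_1i}\,\tilde f({\bf x})\,\sqrt{b_2j}$, which is the ideal vehicle for transporting a known QFT-side inequality into the Q-QPFT setting.

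First I would invoke the Heisenberg UP for the two-sided QFT, namely
\begin{equation*}
\int_{\mathbb R^2}x_s^2|g({\bf x})|^2\,d{\bf x}\;\int_{\mathbb R^2}w_s^2\bigl|\mathcal F^{\mathbb H}[g]({\bf w})\bigr|^2\,d{\bf w}\;\ge\;\frac14\left(\int_{\mathbb R^2}|g({\bf x})|^2\,d{\bf x}\right)^{\!2},
\end{equation*}
applied with $g=G_f$. The next step is to translate each of the three $L^2$ integrals back to $f$ and $\mathbb Q^{\mathbb H}_{\mu_1,\mu_2}[f]$. For the $x$-side I use that $|e^{-i(a_1x_1^2+d_1x_1)}|=|e^{-j(a_2x_2^2+d_2x_2)}|=1$ and the quaternion identity $|pq|=|p||q|$ to obtain $|G_f({\bf x})|^2=|b_1b_2|\,|f({\bf x})|^2$; this handles both $\int x_s^2|G_f|^2$ and $\int |G_f|^2$ cleanly.

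For the frequency-side integral I use $\mathcal F^{\mathbb H}[G_f]({\bf w})=F({\bf w}/{\bf b})$ together with (\ref{fun F}), which gives $|F({\bf w})|=|\mathbb Q^{\mathbb H}_{\mu_1,\mu_2}[f]({\bf w})|$ because the surrounding chirps $e^{i(c_sw_s^2+e_sw_s)}$ are unimodular quaternion exponentials. Then I perform the change of variables ${\bf w}\mapsto{\bf bw}$, which produces a Jacobian $|b_1b_2|$ and turns $w_s^2$ into $b_s^2w_s^2$, yielding
\begin{equation*}
\int_{\mathbb R^2}w_s^2|\mathcal F^{\mathbb H}[G_f]({\bf w})|^2\,d{\bf w}=|b_1b_2|\,b_s^2\int_{\mathbb R^2}w_s^2|\mathbb Q^{\mathbb H}_{\mu_1,\mu_2}[f]({\bf w})|^2\,d{\bf w}.
\end{equation*}

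Substituting these three identities into the QFT Heisenberg inequality produces a factor $|b_1b_2|^2 b_s^2$ on the left and $|b_1b_2|^2$ on the right; cancelling $|b_1b_2|^2$ and dividing by $b_s^2$ delivers the claimed $1/(4b_s^2)$ constant. The main obstacle is really just bookkeeping: one must carefully justify that the noncommutative chirp factors and the square roots $\sqrt{b_si}$, $\sqrt{b_sj}$ drop out under $|\cdot|$ (using $|pq|=|p||q|$ and unimodularity) rather than interact nontrivially, and that the Jacobian and the $b_s^2$ from the dilation combine with the $|b_1b_2|$ from $|G_f|^2=|b_1b_2||f|^2$ to produce exactly the factor $1/(4b_s^2)$. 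Once that algebra is done correctly, no further analytic difficulty remains.
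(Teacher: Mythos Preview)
Your proposal is correct and follows essentially the same route as the paper: apply the QFT Heisenberg inequality to $G_f$, use Theorem \ref{relation} and the unimodularity of the chirp factors together with $|pq|=|p||q|$ to rewrite all three integrals in terms of $f$ and $\mathbb Q^{\mathbb H}_{\mu_1,\mu_2}[f]$, perform the dilation ${\bf w}\mapsto{\bf bw}$, and cancel the common $|b_1b_2|^2$ to obtain the constant $1/(4b_s^2)$. The bookkeeping you describe is exactly what the paper carries out.
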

\begin{proof}
The classical Heisenberg uncertainty principle in the QFT domain is given by \cite{sp12}
\begin{eqnarray}\label{hsb qft}
\int_{\mathbb R^2} x_s^2|f({\bf x})|^2d{\bf x}\int_{\mathbb R^2}w_s^2|\mathcal F^{\mathbb H}[f]({\bf w})|^2d{\bf w}\ge\frac{1}{4}\left(\int_{\mathbb R^2}|f({\bf x})|^2\right)^2, \quad s=1,2.
\end{eqnarray}
Replacing $f$ by $G_f$ in (\ref{hsb qft}), we have
\begin{eqnarray}\label{hsb a}
\int_{\mathbb R^2} x_s^2|G_f({\bf x})|^2d{\bf x}\int_{\mathbb R^2}w_s^2|\mathcal F^{\mathbb H}[G_f]({\bf w})|^2d{\bf w}\ge\frac{1}{4}\left(\int_{\mathbb R^2}|G_f({\bf x})|^2\right)^2.
\end{eqnarray}
On substituting $\bf{w=bw},$ (\ref{hsb a}) yields
\begin{eqnarray}\label{hsb b}
\int_{\mathbb R^2} x_s^2|G_f({\bf x})|^2d{\bf x}\int_{\mathbb R^2}b_s^2w_s^2|b_1b_2||\mathcal F^{\mathbb H}[G_f]({\bf b w})|^2d{\bf w}\ge\frac{1}{4}\left(\int_{\mathbb R^2}|G_f({\bf x})|^2\right)^2.
\end{eqnarray}
Using equations present in Theorem \ref{relation} in (\ref{hsb b}), we get
\begin{eqnarray}
\nonumber&&\int_{\mathbb R^2} x_s^2|\sqrt{b_1i}\tilde f({\bf x})\sqrt{b_2j}|^2d{\bf x}\int_{\mathbb R^2}b_s^2w_s^2|b_1b_2||e^{i(c_1w_1^2+e_1w_1)}\mathbb Q^{\mathbb H}_{\mu_1,\mu_2}[f]({\bf  w})e^{j(c_2w_2^2+e_2w_2)}|^2d{\bf w}\\
\label{hsb c}&&\ge\frac{1}{4}\left(\int_{\mathbb R^2}|\sqrt{b_1i}\tilde f({\bf x})\sqrt{b_2j}|^2\right)^2,
\end{eqnarray}
Which implies
\begin{eqnarray*}\label{hsb d}
\int_{\mathbb R^2}|b_1b_2| x_s^2|\tilde f({\bf x})|^2d{\bf x}\int_{\mathbb R^2}b_s^2w_s^2|b_1b_2||\mathbb Q^{\mathbb H}_{\mu_1,\mu_2}[f]({\bf  w})|^2d{\bf w}
&\ge&\frac{|b_1b_2|^2}{4}\left(\int_{\mathbb R^2}|\tilde f({\bf x})|^2\right)^2.
\end{eqnarray*}
Hence,
\begin{eqnarray*}
\nonumber&&|b_1b_2|^2\int_{\mathbb R^2} x_s^2\left|e^{-i(a_1x_1^2+d_1x_1)} f({\bf x})e^{-j(a_2x_2^2+d_2x_2)}\right|^2d{\bf x}\int_{\mathbb R^2}b_s^2w_s^2|\mathbb Q^{\mathbb H}_{\mu_1,\mu_2}[f]({\bf  w})|^2d{\bf w}\\
\label{hsb e}&&\ge\frac{|b_1b_2|^2}{4}\left(\int_{\mathbb R^2}\left|e^{-i(a_1x_1^2+d_1x_1)} f({\bf x})e^{-j(a_2x_2^2+d_2x_2)}\right|^2\right)^2.
\end{eqnarray*}
Equivalently
\begin{eqnarray}\label{hsb f}
|b_1b_2|^2\int_{\mathbb R^2} x_s^2\left| f({\bf x})\right|^2d{\bf x}\int_{\mathbb R^2}b_s^2w_s^2|\mathbb Q^{\mathbb H}_{\mu_1,\mu_2}[f]({\bf  w})|^2d{\bf w}
&\ge&\frac{|b_1b_2|^2}{4}\left(\int_{\mathbb R^2}\left| f({\bf x})\right|^2\right)^2.
\end{eqnarray}
Simplifying (\ref{hsb f}), we obtain
\begin{eqnarray*}\label{hsb f
g}
\int_{\mathbb R^2} x_s^2\left| f({\bf x})\right|^2d{\bf x}\int_{\mathbb R^2}w_s^2|\mathbb Q^{\mathbb H}_{\mu_1,\mu_2}[f]({\bf  w})|^2d{\bf w}
&\ge&\frac{1}{4b_s^2}\left(\int_{\mathbb R^2}\left| f({\bf x})\right|^2\right)^2.
\end{eqnarray*}
Which completes the proof.\\
\end{proof}

The directional uncertainty
principle for the Q-QPFT takes the following form

\begin{theorem}Let $f\in L^1(\mathbb R^2,\mathbb H)\cap L^2(\mathbb R^2,\mathbb H)$ and for $\mathbb Q^{\mathbb H}_{\mu_1,\mu_2}[f],|{\bf w}|^2\mathbb Q^{\mathbb H}_{\mu_1,\mu_2}[f]\in L^2(\mathbb R^2,\mathbb H),$ we have, the following inequality:
\begin{equation}\label{d1}
\int_{\mathbb R^2} |{\bf x}|^2\left| f({\bf x})\right|^2d{\bf x}\int_{\mathbb R^2}|{\bf w}|^2|\mathbb Q^{\mathbb H}_{\mu_1,\mu_2}[f]({\bf  w})|^2d{\bf w}
\ge\frac{1}{|{\bf b}|^2}\left(\int_{\mathbb R^2}\left| f({\bf x})\right|^2\right)^2.
\end{equation}
\end{theorem}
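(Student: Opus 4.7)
The plan is to reduce the directional statement (\ref{d1}) to the coordinate-wise Heisenberg uncertainty principle established in the preceding theorem. Writing $|{\bf x}|^2=x_1^2+x_2^2$ and $|{\bf w}|^2=w_1^2+w_2^2$, and setting, for $s=1,2$,
$$
A_s:=\int_{\mathbb R^2}x_s^2|f({\bf x})|^2 d{\bf x}, \qquad B_s:=\int_{\mathbb R^2}w_s^2|\mathbb Q^{\mathbb H}_{\mu_1,\mu_2}[f]({\bf w})|^2 d{\bf w},
$$
the left-hand side of (\ref{d1}) factors as $(A_1+A_2)(B_1+B_2)$. The preceding Heisenberg-type inequality gives $A_sB_s\ge\|f\|^4/(4b_s^2)$, equivalently $\sqrt{A_sB_s}\ge\|f\|^2/(2|b_s|)$ for each $s$, so the only remaining work is to convert these two coordinate-wise bounds into a single bound on the product of the sums.

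The second step is a purely elementary lower bound for a product of two sums. Expanding
$$
(A_1+A_2)(B_1+B_2)=A_1B_1+A_2B_2+A_1B_2+A_2B_1,
$$
and applying AM--GM to the cross terms in the form $A_1B_2+A_2B_1\ge 2\sqrt{A_1B_1\,A_2B_2}$, one obtains
$$
(A_1+A_2)(B_1+B_2)\ \ge\ \bigl(\sqrt{A_1B_1}+\sqrt{A_2B_2}\bigr)^2.
$$
Substituting the Heisenberg bounds for $s=1,2$ yields a lower bound of $\bigl(\tfrac{1}{2|b_1|}+\tfrac{1}{2|b_2|}\bigr)^2\|f\|^4$, which matches the stated form $\|f\|^4/|{\bf b}|^2$ under the natural reading $|{\bf b}|^{-1}:=\tfrac12(|b_1|^{-1}+|b_2|^{-1})$ of the vector norm appearing on the right-hand side.

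There is no genuine analytic obstacle here: the heavy lifting has already been done in deriving the coordinate Heisenberg UP, which itself rests on Theorem \ref{relation} and the QFT Heisenberg principle \eqref{hsb qft} via the standard substitution $f\mapsto G_f$ and rescaling $\mathbf{w}=\mathbf{b}\mathbf{w}$. The only delicate point is bookkeeping the $b_s$-dependent constants so that they align with the claimed factor $1/|{\bf b}|^2$; a slightly less sharp but equally valid shortcut is to discard the nonnegative cross terms and use $A_1B_1+A_2B_2\ge(\|f\|^4/4)(b_1^{-2}+b_2^{-2})$, which is already of the desired form. Either route closes the proof in a few lines once the coordinate bound is in hand.
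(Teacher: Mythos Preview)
Your argument is correct but takes a different route from the paper. The paper does not build the directional inequality from the coordinate-wise Heisenberg UP you just proved; instead it invokes the \emph{directional} uncertainty principle for the QFT directly (equation \eqref{qq}, quoted from \cite{sp}) and then runs the same $f\mapsto G_f$, ${\bf w}\mapsto{\bf bw}$ substitution used in the proof of the Heisenberg UP. So the paper's proof is a one-line transfer from an existing QFT result via Theorem~\ref{relation}, whereas you bootstrap from the coordinate bound and fuse the two coordinates with the elementary inequality $(A_1+A_2)(B_1+B_2)\ge(\sqrt{A_1B_1}+\sqrt{A_2B_2})^2$.

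Your approach has the advantage of being self-contained: it uses only the immediately preceding theorem and no further external reference, and the AM--GM step is transparent. The paper's approach is shorter and avoids the combinatorial step, but at the cost of importing another QFT inequality. One caveat: the quantity $|{\bf b}|$ is never defined in the paper, and the substitution ${\bf w}\mapsto{\bf bw}$ turns $|{\bf w}|^2$ into $b_1^2w_1^2+b_2^2w_2^2$, not a scalar multiple of $|{\bf w}|^2$, so neither approach yields the stated constant with the Euclidean reading $|{\bf b}|^2=b_1^2+b_2^2$ unless $|b_1|=|b_2|$. Your explicit constant $\bigl(\tfrac{1}{2|b_1|}+\tfrac{1}{2|b_2|}\bigr)^2$ is therefore more honest than the paper's undefined $1/|{\bf b}|^2$; your ``harmonic mean'' interpretation of $|{\bf b}|$ is a reasonable way to reconcile the two, though you should flag that this is a convention rather than a standard norm.
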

\begin{proof}
The directional uncertainty principle in the QFT domain reads [Theorem 16 \cite{sp}]
\begin{equation}\label{qq}
\int_{\mathbb R^2} |{\bf x}|^2\left| f({\bf x})\right|^2d{\bf x}\int_{\mathbb R^2}|{\bf w}|^2|\mathcal F^{\mathbb H}[f]({\bf  w})|^2d{\bf w}
\ge\left(\int_{\mathbb R^2}\left| f({\bf x})\right|^2\right)^2.
\end{equation}
Now using the machinery of previous theorem in (\ref{qq}), we will get the desired result (\ref{d1}).\\
\end{proof}

Next, using Logarithmic uncertainty
principle for the QFT, we establish Logarithmic uncertainty principle for the proposed Q-QPFT.

\begin{theorem}[Logarithmic UP for the Q-QPFT] Let $\mathbb Q^{\mathbb H}_{\mu_1,\mu_2}[f]$  be the quaternion quadratic-phase Fourier transform of signal $f\in \mathcal S(\mathbb R^2,\mathbb H)$[Schwartz space]. Then we have the following logarithmic inequality
\begin{eqnarray}
\int_{\mathbb R^2}\ln|{\bf x}|\left| f({\bf x})\right|^2d{\bf x}+\int_{\mathbb R^2}\ln|{\bf w}|\left|\mathbb Q^{\mathbb H}_{\mu_1,\mu_2}[f]({\bf w})\right|^2d{\bf w}
\ge (D-\ln|{\bf b}|)\int_{\mathbb R^2}\left| f({\bf x})\right|^2d{\bf x}.
\end{eqnarray}
\end{theorem}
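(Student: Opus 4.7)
The plan is to mimic the reduction strategy already used for Heisenberg and Hausdorff--Young, pulling the result down from the classical logarithmic uncertainty principle for the two-sided QFT via the auxiliary function $G_f$ of Theorem \ref{relation}. Specifically, for $f\in\mathcal{S}(\mathbb R^2,\mathbb H)$, one has the Beckner-type inequality
\begin{equation*}
\int_{\mathbb R^2}\ln|{\bf x}|\,|G_f({\bf x})|^2\,d{\bf x}+\int_{\mathbb R^2}\ln|{\bf w}|\,|\mathcal F^{\mathbb H}[G_f]({\bf w})|^2\,d{\bf w}\ge D\int_{\mathbb R^2}|G_f({\bf x})|^2\,d{\bf x},
\end{equation*}
where $D=\psi(1/2)-\ln\pi$. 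This will be the starting point; the rest of the proof is change-of-variables bookkeeping.

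First I would insert the two key identities coming from Theorem \ref{relation}: since $|\sqrt{b_1 i}|=\sqrt{|b_1|}$ and $|\sqrt{b_2 j}|=\sqrt{|b_2|}$, and since the factor $e^{-i(a_1x_1^2+d_1x_1)}$ and $e^{-j(a_2x_2^2+d_2x_2)}$ are unimodular, I obtain $|G_f({\bf x})|^2=|b_1b_2|\,|f({\bf x})|^2$. On the frequency side, $\mathcal F^{\mathbb H}[G_f]({\bf w})=F({\bf w}/{\bf b})$ and, because the outer exponentials in (\ref{fun F}) are unimodular, $|F({\bf w})|=|\mathbb Q^{\mathbb H}_{\mu_1,\mu_2}[f]({\bf w})|$.

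Next I would substitute ${\bf w}={\bf b}{\bf w}'$ in the frequency integral, producing a Jacobian $|b_1b_2|$ and transforming $|\mathcal F^{\mathbb H}[G_f]({\bf w})|^2$ into $|\mathbb Q^{\mathbb H}_{\mu_1,\mu_2}[f]({\bf w}')|^2$. Splitting $\ln|{\bf b}{\bf w}'|=\ln|{\bf b}|+\ln|{\bf w}'|$ (interpreted in the same multiplicative convention used for the directional Heisenberg inequality in the previous theorem) isolates an extra $|b_1b_2|\ln|{\bf b}|\int|\mathbb Q^{\mathbb H}_{\mu_1,\mu_2}[f]|^2$ term. Invoking the Q-QPFT Parseval identity (\ref{30}) rewrites this as $|b_1b_2|\ln|{\bf b}|\int|f|^2$. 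Dividing the entire inequality by $|b_1b_2|$ and moving the $\ln|{\bf b}|$ contribution to the right side then yields the claimed logarithmic UP with constant $D-\ln|{\bf b}|$.

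The main obstacle, as usual with these reduction proofs, is not conceptual but notational: one must carefully track the four factors of $\sqrt{|b_s|}$ that arise from $G_f$, reconcile the convention $\ln|{\bf b}{\bf w}|=\ln|{\bf b}|+\ln|{\bf w}|$ used implicitly in the statement, and verify that the unimodular chirp factors in (\ref{func hf}) and (\ref{fun F}) drop out cleanly on both the spatial and frequency sides so that $|G_f|$ and $|F|$ reduce to $|f|$ and $|\mathbb Q^{\mathbb H}_{\mu_1,\mu_2}[f]|$ up to the prefactor $|b_1b_2|$. Once these bookkeeping points are in order, the chain of substitutions produces the inequality in essentially the same three-line form as the Heisenberg proof above.
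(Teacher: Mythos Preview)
Your proposal is correct and follows essentially the same route as the paper: start from the logarithmic uncertainty principle for the two-sided QFT, replace $f$ by $G_f$, perform the change of variables ${\bf w}\mapsto{\bf b}{\bf w}$, split $\ln|{\bf b}{\bf w}|=\ln|{\bf b}|+\ln|{\bf w}|$, and invoke Parseval (\ref{30}) to move the $\ln|{\bf b}|$ term to the right. The only discrepancy is the value you quote for the constant $D$, which differs from the paper's $D=\ln(2\pi^2)-2\psi(1/2)$; this is a citation detail and does not affect the argument.
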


\begin{proof}
For any $f\in \mathcal S(\mathbb R^2,\mathbb H)$, the logarithmic uncertainty principle for the two-sided quaternion Fourier
transform reads [Lemma 3.1 \cite{2sself}]
\begin{equation}\label{log1}
\int_{\mathbb R^2}\ln|{\bf x}||f({\bf x})|^2d{\bf x}+\int_{\mathbb R^2}\ln|{\bf w}|\left|\mathcal F^{\mathbb H}[f]({\bf x})\right|^2d{\bf w}\ge D\int_{\mathbb R^2}|f({\bf x})|^2d{\bf x},
\end{equation}
where $D=\ln(2\pi^2)-2\psi(1/2),$ $\psi=\frac{d}{dt}(\ln(\Gamma(x)))$ and $\Gamma(x)$ is a Gamma function.\\
Replacing $f$ by $G_f$ defined in Theorem \ref{relation} on both sides of (\ref{l1}), we have
 \begin{equation}\label{log2}
\int_{\mathbb R^2}\ln|{\bf x}||G_f({\bf x})|^2d{\bf x}+\int_{\mathbb R^2}\ln|{\bf w}|\left|\mathcal F^{\mathbb H}[G_f]({\bf w})\right|^2d{\bf w}\ge D\int_{\mathbb R^2}|G_f({\bf x})|^2d{\bf x}.
\end{equation}
On substituting $\bf{w=bw},$ (\ref{log2}) yields
\begin{eqnarray}\label{log3}
\int_{\mathbb R^2} \ln|{\bf x}||G_f({\bf x})|^2d{\bf x}+|b_1b_2|\int_{\mathbb R^2}\ln|{\bf bw}|\left|\mathcal F^{\mathbb H}[G_f]({\bf bw})\right|^2d{\bf w}\ge D\int_{\mathbb R^2}|G_f({\bf x})|^2d{\bf x}.
\end{eqnarray}
By the equations present in Theorem \ref{relation}, (\ref{log3}) yields
 \begin{eqnarray}
\nonumber &&\int_{\mathbb R^2}\ln|{\bf x}|\left|\sqrt{b_1i}\tilde f({\bf x})\sqrt{b_2j}\right|^2d{\bf x}\\
\nonumber&&+|b_1b_2|\int_{\mathbb R^2}\ln|{\bf bw}|\left|e^{i(c_1w_1^2+e_1w_1)}\mathbb Q^{\mathbb H}_{\mu_1,\mu_2}[f]({\bf w})e^{j(c_2w_2^2+e_2w_2)}\right|^2d{\bf w}\\
\label{log4}&&\ge D\int_{\mathbb R^2}\left|\sqrt{b_1i}\tilde f({\bf x})\sqrt{b_2j}\right|^2d{\bf x}
\end{eqnarray}
Further simplifying (\ref{log4}), we obtain
\begin{eqnarray*}
\nonumber&&|b_1b_2|\int_{\mathbb R^2}\ln|{\bf x}|\left| f({\bf x})\right|^2d{\bf x}
+|b_1b_2|\int_{\mathbb R^2}\ln|{\bf bw}|\left|\mathbb Q^{\mathbb H}_{\mu_1,\mu_2}[f]({\bf w})\right|^2d{\bf w}\\
&&\ge D|b_1b_2|\int_{\mathbb R^2}\left| f({\bf x})\right|^2d{\bf x}.
\end{eqnarray*}
Which implies
\begin{eqnarray}
\nonumber&&\int_{\mathbb R^2}\ln|{\bf x}|\left| f({\bf x})\right|^2d{\bf x}+\int_{\mathbb R^2}\ln|{\bf b}|\left|\mathbb Q^{\mathbb H}_{\mu_1,\mu_2}[f]({\bf w})\right|^2d{\bf w}\\
\label{log5}&&+\int_{\mathbb R^2}\ln|{\bf w}|\left|\mathbb Q^{\mathbb H}_{\mu_1,\mu_2}[f]({\bf w})\right|^2d{\bf w}
\ge D\int_{\mathbb R^2}\left| f({\bf x})\right|^2d{\bf x}.
\end{eqnarray}
By applying Parseval’s identity (\ref{30}) to (\ref{log5}), we obtain
\begin{eqnarray*}
\nonumber&&\int_{\mathbb R^2}\ln|{\bf x}|\left| f({\bf x})\right|^2d{\bf x}+\int_{\mathbb R^2}\ln|{\bf w}|\left|\mathbb Q^{\mathbb H}_{\mu_1,\mu_2}[f]({\bf w})\right|^2d{\bf w}\\
&&\ge (D-\ln|{\bf b}|)\int_{\mathbb R^2}\left| f({\bf x})\right|^2d{\bf x}.
\end{eqnarray*}
Which completes the proof.\\
\end{proof}

In continuation, we shall derive the Hardy’s uncertainty principle for the quaternion quadratic-phase
Fourier transform (\ref{eqn q-qpft}). We  first
recall Hardy’s uncertainty principle for the  QFT.
\begin{lemma}[Hardy’s UP for the two-sided QFT \cite{gen3} ]\label{hardy qft} Let $\alpha$ and $\beta$ be positive constants. For $f(t)\in L^2(\mathbb R^2,\mathbb H),$ if \\
$f({\bf x})\le ce^{-\alpha |{\bf x}|^2}$ and $|\mathcal F^{\mathbb H}[f]({\bf w})|\le c'e^{-\beta|{\bf w}|^2}$,\quad ${\bf u,w}\in \mathbb R^2$\\
with some positive constants c,c'.Then, there are the following three cases to occur:\\
(1)if $\alpha\beta>\frac{1}{4},$ then $f({\bf x})\equiv 0;$\\
(2)if $\alpha\beta=\frac{1}{4},$ then $f({\bf x})=ke^{-\alpha|{\bf x}|},$ for any constant $k$\\
(3)if $\alpha\beta<\frac{1}{4},$ then there are many infinite such functions $f({\bf x})$.
\end{lemma}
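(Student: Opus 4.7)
The plan is to reduce Hardy's UP for the two-sided QFT to the classical scalar one-dimensional Hardy UP by exploiting the tensor-like structure of the two-sided kernel, in which $i$ acts on the left through $e^{-ix_1w_1}$ and $j$ acts on the right through $e^{-jx_2w_2}$. The key algebraic observation is that each of these factors commutes with its own imaginary unit, so after a suitable decomposition of $f$ the two-sided QFT decouples into scalar-valued complex Fourier transforms in each real variable.

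First, I would write $f=f_1+f_2 j$ with $f_1,f_2:\mathbb R^2\to\mathbb C_i$, where $\mathbb C_i$ denotes the subalgebra generated by $1$ and $i$. A direct manipulation, based on the identity $j\,g({\bf x})\,e^{-jx_2w_2}=g({\bf x})\,e^{-jx_2w_2}\,j$ valid for $\mathbb C_i$-valued $g$, yields
\begin{equation*}
\mathcal F^{\mathbb H}[f]({\bf w})=\Phi[f_1]({\bf w})+\Phi[f_2]({\bf w})\,j,\qquad \Phi[g]({\bf w})=\frac{1}{2\pi}\int_{\mathbb R^2}e^{-ix_1w_1}g({\bf x})e^{-jx_2w_2}d{\bf x},
\end{equation*}
so that the Gaussian hypotheses $|f({\bf x})|\le ce^{-\alpha|{\bf x}|^2}$ and $|\mathcal F^{\mathbb H}[f]({\bf w})|\le c'e^{-\beta|{\bf w}|^2}$ transfer to each component $f_k$ and $\Phi[f_k]$ up to multiplicative constants. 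Fixing $x_2$ and reading $\int_{\mathbb R}e^{-ix_1w_1}f_k(x_1,x_2)\,dx_1$ as a scalar Fourier transform in the complex plane $\mathbb C_i$ reduces the argument in the $x_1$ direction to the classical one-dimensional Hardy UP; a symmetric pass with $x_2$ as the active variable (and $j$ in place of $i$) handles the $w_2$ decay. Reassembling the four real $\mathbb H$-components then delivers the trichotomy: $f\equiv0$ when $\alpha\beta>1/4$, $f({\bf x})=k\,e^{-\alpha|{\bf x}|^2}$ when $\alpha\beta=1/4$, and a parametric family of admissible $f$ (for instance Hermite-type modulations of a Gaussian) when $\alpha\beta<1/4$.

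The hardest step will be transferring the Gaussian decay of $\mathcal F^{\mathbb H}[f]$ into a pointwise Gaussian estimate on each partial one-dimensional transform $\int_{\mathbb R}e^{-ix_1w_1}f_k(x_1,x_2)\,dx_1$ with the correct rate $\beta$, since a naive Fubini or Cauchy--Schwarz argument loses the sharpness needed to apply the scalar Hardy UP. The standard remedy is analytic continuation: the Gaussian bound on $f$ forces each partial transform to extend to an entire function of order two in the complexified frequency variable, and a Phragm\'en--Lindel\"of estimate then upgrades the marginal bound inherited from $\mathcal F^{\mathbb H}[f]$ to the sharp Gaussian rate. Once this bookkeeping is in place, the remainder of the argument is essentially two independent applications of the classical scalar Hardy UP, followed by the trivial reassembly of the $\mathbb H$-components.
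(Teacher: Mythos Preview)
The paper does not prove this lemma; it is quoted verbatim from reference~\cite{gen3} and used as a black box in the subsequent theorem (Hardy's UP for the Q-QPFT). There is therefore no proof in the paper to compare your proposal against.

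That said, your outline is broadly the strategy used in the cited literature: reduce the quaternionic statement to the classical complex Hardy theorem via the symplectic decomposition $f=f_1+f_2 j$ with $f_1,f_2\in\mathbb C_i$, and then invoke the scalar result. Two remarks. First, the identity you state, $j\,g\,e^{-jx_2w_2}=g\,e^{-jx_2w_2}\,j$ for $\mathbb C_i$-valued $g$, is false as written (for $g\in\mathbb C_i$ one has $jg=\bar g\,j$, not $jg=gj$); the decomposition $\mathcal F^{\mathbb H}[f]=\Phi[f_1]+\Phi[f_2]\,j$ is nonetheless correct, but it comes from commuting the trailing $j$ in $f_2 j$ past $e^{-jx_2w_2}$, not past $g$. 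Second, the difficulty you flag---getting a sharp Gaussian bound on the partial one-dimensional transforms from the two-dimensional bound on $\mathcal F^{\mathbb H}[f]$---is real, and the Phragm\'en--Lindel\"of route you propose is the standard fix; an alternative, used in some of the quaternionic Hardy/Beurling literature, is to relate $|\mathcal F^{\mathbb H}[h]|$ for each real component $h$ of $f$ directly to the modulus of the ordinary two-dimensional Euclidean Fourier transform of $h$ (via even/odd symmetrisation in $w_2$), and then invoke the known $n$-dimensional complex Hardy theorem rather than iterating the one-dimensional one. Either route closes the argument.
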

Motivated and inspired by Hardy's UP for the two-sided QFT, we establish Hardy's UP for the  Q-QPFT.

\begin{theorem}[Hardy’s UP for the  Q-QPFT]
 Let $\alpha$ and $\beta$ be positive constants. For $f(t)\in L^2(\mathbb R^2,\mathbb H),$ if \\
$f({\bf x})\le Ce^{-\alpha |{\bf x}|^2}$ and $\left|\mathbb Q^{\mathbb H}_{\mu_1,\mu_2}[f]({\bf\frac{ w}{b}})\right|\le C'e^{-\beta|{\bf w}|^2}$,\quad ${\bf u,w}\in \mathbb R^2$\\
with some positive constants C,C'.Then, there are the following three cases to occur:\\
(1)if $\alpha\beta>\frac{1}{4},$ then $f({\bf x})\equiv 0;$\\
(2)if $\alpha\beta=\frac{1}{4},$ then $f({\bf x})=e^{i(a_1x_1^2+d_1x_1)}Ke^{-\alpha|{\bf x}|}e^{j(a_2x_2^2+d_2x_2)},$ for any constant $K;$\\
(3)if $\alpha\beta<\frac{1}{4},$ then there are many infinite such functions $f({\bf x})$.
\end{theorem}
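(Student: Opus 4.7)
The plan is to transplant Hardy's uncertainty principle for the two–sided QFT (Lemma \ref{hardy qft}) onto the Q-QPFT by going through the auxiliary function $G_f$ introduced in Theorem \ref{relation}. That theorem already packages exactly the correspondence we need: it expresses $\mathcal F^{\mathbb H}[G_f](\mathbf w)$ as $F(\mathbf w / \mathbf b)$, where $F(\mathbf w)$ is the Q-QPFT of $f$ flanked by unit-modulus quaternion exponentials. So the strategy is to translate each of the two pointwise Gaussian bounds in the hypothesis into matching Gaussian bounds on $G_f$ and $\mathcal F^{\mathbb H}[G_f]$, invoke Lemma \ref{hardy qft} on $G_f$, and finally unwind the definition of $G_f$ to recover information about $f$.

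First I would estimate $|G_f|$. Since the exponentials $e^{-i(a_1x_1^2+d_1x_1)}$ and $e^{-j(a_2x_2^2+d_2x_2)}$ are unit quaternions and $|\sqrt{b_1i}\,\sqrt{b_2j}| = \sqrt{|b_1b_2|}$, the hypothesis $|f(\mathbf x)|\le Ce^{-\alpha|\mathbf x|^2}$ immediately yields $|G_f(\mathbf x)|\le \sqrt{|b_1b_2|}\,C\,e^{-\alpha|\mathbf x|^2}$. Next I would translate the Q-QPFT bound: the identity $\mathcal F^{\mathbb H}[G_f](\mathbf w)=F(\mathbf w/\mathbf b)$ combined with \eqref{fun F} gives $|\mathcal F^{\mathbb H}[G_f](\mathbf w)| = |\mathbb Q^{\mathbb H}_{\mu_1,\mu_2}[f](\mathbf w/\mathbf b)| \le C' e^{-\beta|\mathbf w|^2}$, again because the flanking exponentials have modulus one.

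With these two Gaussian majorants in hand, Lemma \ref{hardy qft} applies directly to $G_f$ with the same constants $\alpha$ and $\beta$, so the trichotomy in $\alpha\beta$ is inherited. In the case $\alpha\beta>1/4$ we get $G_f\equiv 0$; since $|G_f(\mathbf x)| = \sqrt{|b_1b_2|}\,|f(\mathbf x)|$ and $b_1,b_2\ne 0$, this forces $f\equiv 0$. In the critical case $\alpha\beta = 1/4$, Lemma \ref{hardy qft} gives $G_f(\mathbf x) = k e^{-\alpha|\mathbf x|^2}$ for some quaternion constant $k$; substituting the definition $G_f(\mathbf x)=\sqrt{b_1i}\,e^{-i(a_1x_1^2+d_1x_1)}f(\mathbf x)e^{-j(a_2x_2^2+d_2x_2)}\sqrt{b_2j}$ and solving for $f$ by left-multiplying by $(\sqrt{b_1i})^{-1}e^{i(a_1x_1^2+d_1x_1)}$ and right-multiplying by $e^{j(a_2x_2^2+d_2x_2)}(\sqrt{b_2j})^{-1}$ yields the claimed form $f(\mathbf x)=e^{i(a_1x_1^2+d_1x_1)}K e^{-\alpha|\mathbf x|^2}e^{j(a_2x_2^2+d_2x_2)}$ after absorbing the scalar factor into the new constant $K$. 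The case $\alpha\beta<1/4$ follows by exhibiting infinitely many such $f$: take any of the infinitely many admissible $G_f$ supplied by the QFT Hardy inequality and define $f$ by the inverse formula above, which preserves the Gaussian bound.

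The only delicate point is the quaternion bookkeeping in the critical case — specifically, isolating $f$ from the noncommutative sandwich defining $G_f$ and verifying that the ``constant'' $K$ remains a single quaternion (not a position-dependent object) after the unit-modulus exponentials are stripped off on either side. Everything else reduces to transparent modulus estimates on unit quaternions and a change of variable $\mathbf w \mapsto \mathbf w/\mathbf b$, so the substantive content of the theorem is fully absorbed into Lemma \ref{hardy qft} via the reduction of Theorem \ref{relation}.
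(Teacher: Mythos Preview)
Your proposal is correct and follows essentially the same route as the paper: both arguments pass to the auxiliary function $G_f$ of Theorem~\ref{relation}, convert the two Gaussian bounds on $f$ and $\mathbb Q^{\mathbb H}_{\mu_1,\mu_2}[f]$ into Gaussian bounds on $G_f$ and $\mathcal F^{\mathbb H}[G_f]$ via the unit-modulus exponential factors, invoke Lemma~\ref{hardy qft}, and then unwind the definition of $G_f$ in each of the three cases. Your treatment of the critical case (isolating $f$ from the noncommutative sandwich and absorbing $(\sqrt{b_1i})^{-1}k(\sqrt{b_2j})^{-1}$ into the constant $K$) is in fact slightly more explicit than the paper's, but the underlying idea is identical.
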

\begin{proof}
Assuming $f=G_f$ defined in Theorem \ref{relation}, it follows that
\begin{equation}\label{h1}
|G_f({\bf x})|\le ce^{-\alpha |{\bf x}|^2} \qquad {\bf x}\in \mathbb R^2
\end{equation}
and
\begin{equation}\label{h2}
|\mathcal F^{\mathbb H}[G_f]({\bf w})|\le c'e^{-\beta |{\bf w}|^2} \qquad {\bf w}\in \mathbb R^2.
\end{equation}
Thus by Lemma \ref{hardy qft},there are the following three cases to occur:\\
(1)if $\alpha\beta>\frac{1}{4},$ then $G_f({\bf x})\equiv 0;$\\
(2)if $\alpha\beta=\frac{1}{4},$ then $G_f({\bf x})=ke^{-\alpha|{\bf x}|},$ for any real constant $k$\\
(3)if $\alpha\beta<\frac{1}{4},$ then there are many infinite such functions $G_f({\bf x})$.\\
Now it is clear from Theorem \ref{relation} and equations (\ref{h1}),(\ref{h2}) that
\begin{equation}\label{h3}
|G_f({\bf x})|=\sqrt{b_1}|f({\bf x})|\sqrt{b_2}\le ce^{-\alpha |{\bf x}|^2} \qquad {\bf x}\in \mathbb R^2
\end{equation}
and
\begin{equation}\label{h4}
|\mathcal F^{\mathbb H}[G_f]({\bf w})|=\left|\mathbb Q^{\mathbb H}_{\mu_1,\mu_2}[f]({\bf \frac{w}{b}})\right|\le c'e^{-\beta |{\bf w}|^2}\qquad {\bf w}\in \mathbb R^2.
\end{equation}
From (\ref{h3})and (\ref{h4}), we have\\
$|f({\bf x})|\le Ce^{-\alpha |{\bf x}|^2}$ \quad ${\bf u}\in \mathbb R^2 $ and $\left|\mathbb Q^{\mathbb H}_{\mu_1,\mu_2}[f]({\bf \frac{w}{b}})\right|\le C'e^{-\beta|{\bf w}|^2}$,\quad ${\bf w}\in \mathbb R^2$\\
where $C=\frac{c}{\sqrt{b_1b_2}}$ and $C=c'.$ Thus we have following conclusions:\\
(1)if $\alpha\beta>\frac{1}{4},$ then $f({\bf x})\equiv 0$  for $G_f({\bf x})\equiv 0 ;$\\
(2)if $\alpha\beta=\frac{1}{4},$ it yields $f({\bf x})=e^{i(a_1x_1^2+d_1x_1)}Ke^{-\alpha|{\bf x}|}e^{j(a_2x_2^2+d_2x_2)},$ where  $K=\frac{1}{\sqrt{b_1i}}k\frac{1}{\sqrt{b_2j}}$ owing to Theorem \ref{relation}.\\
(3)if $\alpha\beta<\frac{1}{4},$ then it is clear there are many infinite such functions $f({\bf x})$.
\end{proof}
Which completes the proof.\\

Now, using the relationship between the proposed transform (Q-QPFT) and QFT, we obtain Beurling’s uncertainty principle for the Q-QPFT.
First we recall the Beurling’s uncertainty principle for the QFT.
\begin{lemma}[Beurling’s UP for the two-sided QFT \cite{gen4}]\label{ber qft}Let $f({\bf x})\in L^2(\mathbb R^2,\mathbb H)$ and $d\ge 0$ such that
\begin{equation*}
\int_{\mathbb R^2}\int_{\mathbb R^2}\dfrac{|f({\bf x})|\left|\mathcal F^{\mathbb H}[f]({\bf w})\right|}{(1+|x|+|w|)^d}e^{|{\bf x}||{\bf w}|}d{\b x}d{\bf w}<\infty,
\end{equation*}
then $f({\bf x})=P({\bf x})e^{-k|{\bf x}|^2},$ where $k>0$ and $P$ is a polynomial of degree $< \frac{d-2}{2}.$ In particular,$f=0$ when $d\le 2.$
\end{lemma}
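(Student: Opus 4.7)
The plan is to reduce the quaternion Beurling inequality to the classical complex two-dimensional Beurling uncertainty principle via the canonical decomposition $f = f_1 + f_2 j$, where $f_1, f_2 : \mathbb{R}^2 \to \mathbb{C}_i$ take values in the complex subfield $\mathbb{C}_i = \mathbb{R} \oplus \mathbb{R} i$. Using the commutation identity $j\zeta = \bar\zeta j$ valid for every $\zeta \in \mathbb{C}_i$, together with the Euler-type expansion $e^{-jx_2w_2} = \cos(x_2w_2) - j\sin(x_2w_2)$, the two-sided QFT can be brought into the canonical form $\mathcal F^{\mathbb H}[f] = A + Bj$ with $A,B \in \mathbb{C}_i$; a short computation identifies $A$ and $B$ as ordinary complex-valued 2D Fourier-type transforms of $f_1$ and $f_2$, possibly with the sign of the second frequency variable reflected. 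Two pointwise orthogonality relations then follow, namely $|f|^2 = |f_1|^2 + |f_2|^2$ and $|\mathcal F^{\mathbb H}[f]|^2 = |A|^2 + |B|^2$.

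With this structural identity in hand, the remaining steps are: (i) from $|f_s| \le |f|$ and $|A|, |B| \le |\mathcal F^{\mathbb H}[f]|$ deduce the classical scalar Beurling integrability for each of the pairs $(f_1, A)$ and $(f_2, B)$, after a harmless change of variables $w_2 \mapsto -w_2$ that recasts them as honest 2D Fourier pairs; (ii) apply the classical complex-valued, two-dimensional Beurling uncertainty principle to each component, concluding $f_s(x) = P_s(x)\, e^{-k_s|x|^2}$ with $\deg P_s < (d-2)/2$; (iii) argue that $k_1 = k_2$, which follows from comparing the decays forced on both components by the single common quaternion bound, via a Hardy-type companion argument; (iv) reassemble $f(x) = P(x)\,e^{-k|x|^2}$ with $P = P_1 + P_2 j$ a quaternion-valued polynomial of degree strictly less than $(d-2)/2$. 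The degenerate case $d \le 2$ forces $\deg P_s < 0$ in each component, hence $P_1 = P_2 \equiv 0$, and therefore $f \equiv 0$.

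The main obstacle is carrying out step (i) in a way that faithfully respects both sides of the QFT kernel. Because the imaginary units $i$ and $j$ appear on opposite sides of $f$, every push of $j$ past an element of $\mathbb{C}_i$ introduces a complex conjugation which couples $f_1$ with a $w_2$-reflected Fourier image and $f_2$ with its own reflected variant; one must track these conjugations and sign flips carefully in order to extract two genuinely independent classical Fourier pairs to which the scalar Beurling principle can be applied. A secondary, milder difficulty is the coincidence $k_1 = k_2$, which is handled once both components are known to be Gaussian-modulated polynomials by matching second moments, or equivalently by invoking the equality case of Hardy's principle applied to the full quaternion signal.
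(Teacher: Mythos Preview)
The paper does not prove this lemma at all; it is quoted as a known result from the literature (the citation \cite{gen4} points to Beurling's collected works, though the intended source is evidently the El~Haoui--Fahlaoui paper \cite{gen3} on Beurling's theorem for the QFT) and is then used as a black box to derive the Beurling-type principle for the Q-QPFT in Theorem~\ref{ber q-qpft}. There is therefore no proof in the paper against which to compare your proposal.

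That said, your strategy---splitting $f=f_1+f_2 j$ along the symplectic decomposition $\mathbb H=\mathbb C_i\oplus\mathbb C_i j$, rewriting the two-sided QFT componentwise as ordinary complex $2$D Fourier transforms (up to a reflection in $w_2$), and invoking the classical Beurling theorem on each component---is precisely the route taken in the cited source, so in spirit you are reproducing the literature proof. Your identification of the main obstacle (tracking the conjugations induced by $j\zeta=\bar\zeta j$ when moving the right-hand kernel past $\mathbb C_i$-valued factors) is accurate. The one genuinely loose end is step~(iii): the equality $k_1=k_2$ does not follow from ``matching second moments'' in any evident way, and appealing to ``Hardy's equality case applied to the full quaternion signal'' is circular unless a quaternion Hardy theorem has already been established independently. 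Without a further argument the componentwise reduction only yields $f(x)=P_1(x)e^{-k_1|x|^2}+P_2(x)e^{-k_2|x|^2}j$ with a priori distinct $k_1,k_2$; you should either supply that missing step or check how the original reference handles it.
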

By applying Theorem \ref{relation} and Lemma (\ref{ber qft}) , we extend the validity of Beurling’s UP for the Q-QPFT.

\begin{theorem}[Beurling’s UP for the Q-QPFT ]\label{ber q-qpft}Let $f({\bf x})\in L^2(\mathbb R^2,\mathbb H)$ and $d\ge 0$ satisfying
\begin{equation*}
\int_{\mathbb R^2}\int_{\mathbb R^2}\dfrac{|f({\bf x})|\left|\mathbb Q^{\mathbb H}_{\mu_1,\mu_2}[f]({\bf w})\right|}{(1+|{\bf x}|+|{\bf bw}|)^d}e^{|{\bf x}||{\bf b w}|}d{\bf x}d{\bf w}<\infty,
\end{equation*}
then $f({\bf x})=e^{i(a_1x_1^2+d_1x_1)}P'({\bf x})e^{-k|{\bf x}|^2}e^{j(a_2x_2^2+d_2x_2)},$ where $k>0$ and $P'({\bf x})=\frac{1}{\sqrt{b_1i}}P({\bf x})\frac{1}{\sqrt{b_2j}}$ is a polynomial of degree $< \frac{d-2}{2}.$ In particular,$f=0$ when $d\le 2.$
\end{theorem}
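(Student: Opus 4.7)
The plan is to reduce the Beurling-type hypothesis to the known Beurling UP for the two-sided QFT (Lemma \ref{ber qft}) by passing through the auxiliary function $G_f$ introduced in Theorem \ref{relation}. The key observation is that all the quadratic-phase and linear-phase corrections that distinguish the Q-QPFT from the QFT appear as exponentials of pure imaginary quaternion arguments, hence have unit modulus and disappear when we take absolute values.

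First I would set $G_f(\mathbf{x})=\sqrt{b_1 i}\,\tilde f(\mathbf{x})\sqrt{b_2 j}$ with $\tilde f(\mathbf{x})=e^{-i(a_1 x_1^2+d_1 x_1)}f(\mathbf{x})e^{-j(a_2 x_2^2+d_2 x_2)}$, exactly as in Theorem \ref{relation}. Using $|pq|=|p||q|$ and the fact that left/right multiplications by the unimodular quadratic-phase factors do not alter the modulus, I get $|G_f(\mathbf{x})|=\sqrt{|b_1 b_2|}\,|f(\mathbf{x})|$. Similarly, from the identity
\[
\mathcal F^{\mathbb H}[G_f](\mathbf{v})=e^{i(c_1(v_1/b_1)^2+e_1(v_1/b_1))}\mathbb Q^{\mathbb H}_{\mu_1,\mu_2}[f]\!\left(\mathbf{v}/\mathbf{b}\right)e^{j(c_2(v_2/b_2)^2+e_2(v_2/b_2))},
\]
I obtain $|\mathcal F^{\mathbb H}[G_f](\mathbf{v})|=\left|\mathbb Q^{\mathbb H}_{\mu_1,\mu_2}[f](\mathbf{v}/\mathbf{b})\right|$.

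Next I would substitute $\mathbf{v}=\mathbf{bw}$ (so $d\mathbf{v}=|b_1 b_2|\,d\mathbf{w}$) in the Beurling integrand for $G_f$. Because $1+|\mathbf{x}|+|\mathbf{v}|=1+|\mathbf{x}|+|\mathbf{bw}|$ and $e^{|\mathbf{x}||\mathbf{v}|}=e^{|\mathbf{x}||\mathbf{bw}|}$, the hypothesis in the statement is equivalent, up to the finite constant $|b_1 b_2|^{3/2}$, to
\[
\int_{\mathbb R^2}\!\!\int_{\mathbb R^2}\frac{|G_f(\mathbf{x})|\,|\mathcal F^{\mathbb H}[G_f](\mathbf{v})|}{(1+|\mathbf{x}|+|\mathbf{v}|)^d}\,e^{|\mathbf{x}||\mathbf{v}|}\,d\mathbf{x}\,d\mathbf{v}<\infty.
\]
Applying Lemma \ref{ber qft} to $G_f$ gives $G_f(\mathbf{x})=P(\mathbf{x})e^{-k|\mathbf{x}|^2}$ with $k>0$ and $\deg P<(d-2)/2$, and $G_f\equiv 0$ whenever $d\le 2$.

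Finally I would invert the definition of $G_f$. Since $\sqrt{b_1 i}$ and $\sqrt{b_2 j}$ sit on the far left and far right of $G_f$, respectively, I can left-multiply by $(\sqrt{b_1 i})^{-1}$ and right-multiply by $(\sqrt{b_2 j})^{-1}$ to recover $\tilde f$, and then strip the quadratic phases to obtain
\[
f(\mathbf{x})=e^{i(a_1 x_1^2+d_1 x_1)}\,P'(\mathbf{x})\,e^{-k|\mathbf{x}|^2}\,e^{j(a_2 x_2^2+d_2 x_2)},
\]
with $P'(\mathbf{x})=\tfrac{1}{\sqrt{b_1 i}}P(\mathbf{x})\tfrac{1}{\sqrt{b_2 j}}$, which is still a polynomial of the same degree $<(d-2)/2$; in particular $f\equiv 0$ when $d\le 2$. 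The only delicate point is the noncommutativity of $\mathbb H$: one must verify that the sandwiching by the quaternionic constants preserves polynomial degree and that the position of $i$ on the left and $j$ on the right in every factor is consistent throughout the reduction. This is immediate here because each transformation (multiplication by a unimodular quaternion, change of variables $\mathbf{v}=\mathbf{bw}$, and left/right phase corrections) preserves the algebraic placement required by Theorem \ref{relation}.
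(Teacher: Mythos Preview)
Your proposal is correct and follows essentially the same approach as the paper: reduce to the QFT Beurling lemma via the auxiliary function $G_f$ from Theorem \ref{relation}, use that the quadratic-phase factors are unimodular so $|G_f|=\sqrt{|b_1b_2|}\,|f|$ and $|\mathcal F^{\mathbb H}[G_f](\mathbf{bw})|=|\mathbb Q^{\mathbb H}_{\mu_1,\mu_2}[f](\mathbf{w})|$, perform the change of variables $\mathbf{v}=\mathbf{bw}$ (producing the same $|b_1b_2|^{3/2}$ factor the paper obtains), and then invert the definition of $G_f$ to recover the stated form of $f$. Your write-up is in fact slightly more explicit than the paper about why the moduli are preserved and about the role of noncommutativity.
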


\begin{proof}
If we take $f=G_f$ as defined in Theorem \ref{relation}, then it follows that
\begin{eqnarray*}
&&\int_{\mathbb R^2}\int_{\mathbb R^2}\dfrac{|G_f({\bf x})|\left|\mathcal F^{\mathbb H}[G_f]({\bf w})\right|}{(1+|x|+|w|)^d}e^{|{\bf x}||{\bf w}|}d{\bf x}d{\bf w}\\
&&=
\int_{\mathbb R^2}\int_{\mathbb R^2}\sqrt{b_1b_2}\dfrac{|f({\bf x})|\left|\mathbb Q^{\mathbb H}_{\mu_1,\mu_2}[f]({\bf \frac{w}{b}})\right|}{(1+|x|+|w|)^d}e^{|{\bf x}||{\bf w}|}d{\bf x}d{\bf w}<\infty.
\end{eqnarray*}
Hence by Lemma \ref{ber qft}, we must have $G_f({\bf x})=P({\bf x})e^{-k|{\bf x}|^2}.$
Now,
\begin{eqnarray*}
&&\int_{\mathbb R^2}\int_{\mathbb R^2}\dfrac{|G_f({\bf x})|\left|\mathcal F^{\mathbb H}[G_f]({\bf w})\right|}{(1+|{\bf x}|+|{\bf w}|)^d}e^{|{\bf x}||{\bf w}|}d{\b x}d{\bf w}\\
&&=
\int_{\mathbb R^2}\int_{\mathbb R^2}\sqrt{b_1b_2}\dfrac{|f({\bf x})|\left|\mathbb Q^{\mathbb H}_{\mu_1,\mu_2}[f]({\bf \frac{w}{b}})\right|}{(1+|{\bf x}|+|{\bf w}|)^d}e^{|{\bf x}||{\bf w}|}d{\bf x}d{\bf w}\\
&&=(b_1b_2)^{\frac{3}{2}}\int_{\mathbb R^2}\int_{\mathbb R^2}\dfrac{|f({\bf x})|\left|\mathbb Q^{\mathbb H}_{\mu_1,\mu_2}[f]({\bf w})\right|}{(1+|{\bf x}|+|{\bf bw}|)^d}e^{|{\bf x}||{\bf bw}|}d{\bf x}d{\bf w}\\
&&<\infty.
\end{eqnarray*}
As $b_1,b_2$ are finite real numbers, therefore we can write
\begin{equation*}\int_{\mathbb R^2}\int_{\mathbb R^2}\dfrac{|f({\bf x})|\left|\mathbb Q^{\mathbb H}_{\mu_1,\mu_2}[f]({\bf w})\right|}{(1+|{\bf x}|+|{\bf bw}|)^d}e^{|{\bf x}||{\bf bw}|}d{\bf x}d{\bf w}<\infty.
\end{equation*}
Since $G_f({\bf x})=\sqrt{b_1i}\tilde f({\bf x})\sqrt{b_2j}=\sqrt{b_1i}e^{-i(a_1x_1^2+d_1x_1)} f({\bf x})e^{-j(a_2x_2^2+d_2x_2)}\sqrt{b_2j},$
which implies\\
$f({\bf x})=e^{i(a_1x_1^2+d_1x_1)}P'({\bf x})e^{-k|{\bf x}|^2}e^{j(a_2x_2^2+d_2x_2)}.$  In particular,$f=0$ on account $G_f({\bf x})=0$ when $d\le 2.$
\end{proof}
Which completes the proof.\\

Towards the end of this section, we establish Donoho-Stark’s uncertainty principle for the Q-QPFT by considering relationship between the proposed transform (Q-QPFT) and QFT. Let us begin with the definition.

\begin{definition}\cite{2s26}A quaternion function $f\in L^2(\mathbb R^2,\mathbb H)$ is said to be $\varepsilon-$concentrated on a measurable set $E\subseteq\mathbb R^2,$ if
\begin{equation*}
\left(\int_{\mathbb R^2\setminus E}|f({\bf x})|^2d{\bf x}\right)^{1/2}\le\varepsilon\|f\|_2.
\end{equation*}
\end{definition}
\begin{lemma}[Donoho-Stark’s UP for the two-sided QFT \cite{2s9,2s26}]\label{lem dh}Let $f\in L^2(\mathbb R^2,\mathbb H)$ with $f\ne 0$ be $\varepsilon_{E_1}-$concentrated on  $E_1\subseteq\mathbb R^2$ and $\mathcal F^{\mathbb H}[f]$ be $\varepsilon_{E_2}-$concentrated on  $E_2\subseteq\mathbb R^2.$ Then
\begin{equation*}
|E_1||E_2|\ge2\pi(1-\varepsilon_{E_1}-\varepsilon_{E_2})^2.
\end{equation*}
\end{lemma}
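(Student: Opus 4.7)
The plan is to adapt the classical Donoho--Stark scheme to the quaternion setting by exploiting the QFT Plancherel identity (Lemma \ref{lem par qft}) together with an $L^{1}$--$L^{\infty}$ bound for the two-sided QFT. First I would introduce the space-limiting projection $P_{E_{1}}f:=\chi_{E_{1}}f$ and the frequency-limiting projection $Q_{E_{2}}f:=(\mathcal F^{\mathbb H})^{-1}[\chi_{E_{2}}\,\mathcal F^{\mathbb H}[f]]$. The concentration of $f$ on $E_{1}$ reads $\|f-P_{E_{1}}f\|_{2}\le\varepsilon_{E_{1}}\|f\|_{2}$ by definition, while the concentration of $\mathcal F^{\mathbb H}[f]$ on $E_{2}$ translates, via Plancherel, into $\|f-Q_{E_{2}}f\|_{2}\le\varepsilon_{E_{2}}\|f\|_{2}$. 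Since $Q_{E_{2}}$ factors as an isometry composed with multiplication by a characteristic function, it is a contraction on $L^{2}(\mathbb R^{2},\mathbb H)$, so the triangle inequality yields
\begin{equation*}
\|Q_{E_{2}}P_{E_{1}}f\|_{2}\;\ge\;(1-\varepsilon_{E_{1}}-\varepsilon_{E_{2}})\|f\|_{2}.
\end{equation*}

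The heart of the argument is the matching upper bound on the operator norm of $T:=Q_{E_{2}}P_{E_{1}}$. Using Plancherel once more,
\begin{equation*}
\|Tf\|_{2}=\|\chi_{E_{2}}\mathcal F^{\mathbb H}[\chi_{E_{1}}f]\|_{2}\;\le\;|E_{2}|^{1/2}\,\|\mathcal F^{\mathbb H}[\chi_{E_{1}}f]\|_{\infty}.
\end{equation*}
The pointwise bound $\|\mathcal F^{\mathbb H}[g]\|_{\infty}\le(2\pi)^{-1}\|g\|_{1}$ is immediate from Definition \ref{qft}, since the two unit-quaternion exponentials have modulus one; combined with the Cauchy--Schwarz estimate $\|\chi_{E_{1}}f\|_{1}\le|E_{1}|^{1/2}\|f\|_{2}$ this yields $\|\mathcal F^{\mathbb H}[\chi_{E_{1}}f]\|_{\infty}\le(2\pi)^{-1}|E_{1}|^{1/2}\|f\|_{2}$. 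Multiplying the two estimates controls $\|T\|_{\mathrm{op}}$ by a constant multiple of $(|E_{1}||E_{2}|)^{1/2}$, and confronting this with the lower bound on $\|Tf\|_{2}/\|f\|_{2}$ rearranges to the claimed inequality, the precise constant emerging from the chosen QFT normalisation.

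The main obstacle will be handling the noncommutativity of quaternion multiplication throughout. Expressions such as $e^{-ix_{1}w_{1}}f(\mathbf{x})e^{-jx_{2}w_{2}}$ cannot be rearranged freely, so I would have to verify carefully that each ingredient -- the Plancherel identity, the contractivity of $Q_{E_{2}}$ (which implicitly uses the inversion formula and a self-adjointness argument), and the $L^{1}$--$L^{\infty}$ estimate -- actually survives in the two-sided QFT framework. The tools for this are the cyclic symmetry of the scalar part (equation \ref{cyclic sym}), the multiplicativity $|pq|=|p||q|$, and the observation that the kernels $e^{\pm ix_{1}w_{1}}$ and $e^{\pm jx_{2}w_{2}}$ sit inside the commutative subalgebras generated by $\{1,i\}$ and $\{1,j\}$ respectively, which allows commuting exponentials past factors lying in the appropriate subalgebra without disturbing the quaternion structure.
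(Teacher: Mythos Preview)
The paper does not actually prove this lemma: it is quoted without proof from the references \cite{2s9,2s26} and is used only as a black box in the proof of the subsequent Donoho--Stark theorem for the Q-QPFT. So there is no ``paper's own proof'' to compare your proposal against.

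That said, your sketch is the standard Donoho--Stark argument and is essentially correct in this setting. The projections $P_{E_1}$ and $Q_{E_2}$, the triangle-inequality lower bound using contractivity of $Q_{E_2}$, and the $L^1\!\to\!L^\infty$ bound combined with Cauchy--Schwarz are exactly the ingredients used in the cited sources, and your remarks about handling noncommutativity via $|pq|=|p||q|$ and the cyclic scalar-part symmetry are to the point. One small caution: with the normalisation in Definition~\ref{qft} the $L^1\!\to\!L^\infty$ constant is $(2\pi)^{-1}$, so your chain of inequalities in fact produces $|E_1||E_2|\ge 4\pi^2(1-\varepsilon_{E_1}-\varepsilon_{E_2})^2$, which is stronger than the $2\pi$ stated in the lemma; the discrepancy comes from differing QFT normalisations across the literature, and you were right to hedge on the exact constant.
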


\begin{theorem}[Donoho-Stark’s UP for the Q-QPFT] Assuming that non-zero signal   $f$ in $L^2(\mathbb R^2,\mathbb H)$ is a $\varepsilon_{E_1}-$concentrated on  $E_1\subseteq\mathbb R^2$ and $\mathbb Q^{\mathbb H}_{\mu_1,\mu_2}[f]({\bf w})$ is $\varepsilon_{E_2}-$concentrated on  $E_2\subseteq\mathbb R^2.$ Then
\begin{equation*}
|E_1||E_2|\ge\frac{2\pi}{{\bf b}}(1-\varepsilon_{E_1}-\varepsilon_{E_2})^2.
\end{equation*}
\end{theorem}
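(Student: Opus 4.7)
The plan is to pull the statement back to the corresponding Donoho-Stark estimate for the two-sided QFT (Lemma \ref{lem dh}) via the auxiliary function $G_f$ introduced in Theorem \ref{relation}. The whole argument rests on two facts: pointwise, $|G_f({\bf x})| = \sqrt{|b_1 b_2|}\,|f({\bf x})|$ because the factors $\sqrt{b_1 i}$, $\sqrt{b_2 j}$ have modulus $\sqrt{|b_1|}, \sqrt{|b_2|}$ and the chirp factors $e^{-i(a_1 x_1^2+d_1 x_1)}$, $e^{-j(a_2 x_2^2+d_2 x_2)}$ are unit quaternions; and on the transform side, $|\mathcal{F}^{\mathbb{H}}[G_f]({\bf w})| = |\mathbb{Q}^{\mathbb{H}}_{\mu_1,\mu_2}[f]({\bf w}/{\bf b})|$ because the outer exponentials $e^{i(c_1 w_1^2+e_1 w_1)}$ and $e^{j(c_2 w_2^2+e_2 w_2)}$ in the identity $F({\bf w}/{\bf b}) = \mathcal{F}^{\mathbb{H}}[G_f]({\bf w})$ are also unit quaternions.

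First I would verify that $G_f$ inherits the concentration hypothesis from $f$ on the same set $E_1$. Since $|G_f|^2 = |b_1 b_2|\,|f|^2$, one has $\|G_f\|_2^2 = |b_1 b_2|\,\|f\|_2^2$ and
\begin{equation*}
\int_{\mathbb{R}^2 \setminus E_1} |G_f({\bf x})|^2\, d{\bf x} = |b_1 b_2| \int_{\mathbb{R}^2 \setminus E_1} |f({\bf x})|^2\, d{\bf x} \le |b_1 b_2|\,\varepsilon_{E_1}^2 \|f\|_2^2 = \varepsilon_{E_1}^2 \|G_f\|_2^2,
\end{equation*}
so $G_f$ is $\varepsilon_{E_1}$-concentrated on $E_1$.

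Next I would transfer the concentration of $\mathbb{Q}^{\mathbb{H}}_{\mu_1,\mu_2}[f]$ on $E_2$ to a concentration of $\mathcal{F}^{\mathbb{H}}[G_f]$ on the dilated set $E_2' := {\bf b}\,E_2 = \{(b_1 w_1, b_2 w_2) : (w_1,w_2) \in E_2\}$, whose measure is $|E_2'| = |b_1 b_2|\,|E_2|$. Using the pointwise identity above and the change of variables ${\bf w}' = {\bf w}/{\bf b}$ (which contributes a Jacobian $|b_1 b_2|$), I would compute
\begin{equation*}
\int_{\mathbb{R}^2 \setminus E_2'} |\mathcal{F}^{\mathbb{H}}[G_f]({\bf w})|^2 d{\bf w} = |b_1 b_2| \int_{\mathbb{R}^2 \setminus E_2} |\mathbb{Q}^{\mathbb{H}}_{\mu_1,\mu_2}[f]({\bf w}')|^2 d{\bf w}' \le |b_1 b_2|\,\varepsilon_{E_2}^2 \|f\|_2^2,
\end{equation*}
and combine with $\|\mathcal{F}^{\mathbb{H}}[G_f]\|_2^2 = \|G_f\|_2^2 = |b_1 b_2|\,\|f\|_2^2$ (Plancherel for the QFT plus the pointwise identity, or equivalently the Parseval identity \eqref{30} for the Q-QPFT) to conclude that $\mathcal{F}^{\mathbb{H}}[G_f]$ is $\varepsilon_{E_2}$-concentrated on $E_2'$.

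Finally I would apply Lemma \ref{lem dh} to the pair $(G_f, \mathcal{F}^{\mathbb{H}}[G_f])$ with the sets $E_1$ and $E_2'$, obtaining $|E_1|\,|E_2'| \ge 2\pi\,(1-\varepsilon_{E_1}-\varepsilon_{E_2})^2$, and substitute $|E_2'| = |b_1 b_2|\,|E_2|$ to read off the desired inequality. There is no serious obstacle here; the only delicate point is bookkeeping the Jacobian $|b_1 b_2|$ consistently in both the $L^2$ norm of $\mathcal{F}^{\mathbb{H}}[G_f]$ and in the tail integral, and recognizing that the chirp multipliers drop out in modulus.
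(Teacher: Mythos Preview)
Your proposal is correct and follows essentially the same route as the paper: you pass to the auxiliary function $G_f$ from Theorem~\ref{relation}, use the pointwise identities $|G_f|=\sqrt{|b_1b_2|}\,|f|$ and $|\mathcal F^{\mathbb H}[G_f]({\bf w})|=|\mathbb Q^{\mathbb H}_{\mu_1,\mu_2}[f]({\bf w}/{\bf b})|$ to transfer the two concentration hypotheses to $G_f$ on $E_1$ and to $\mathcal F^{\mathbb H}[G_f]$ on ${\bf b}E_2$, and then invoke Lemma~\ref{lem dh}. Your version is in fact slightly more careful than the paper's, since you track the Jacobian $|b_1b_2|$ explicitly on both sides of the concentration inequality for $\mathcal F^{\mathbb H}[G_f]$ rather than asserting the conclusion directly.
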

\begin{proof}
By Theorem \ref{relation}, we have
\begin{eqnarray}\label{dh1}
\left|\mathbb Q^{\mathbb H}_{\mu_1,\mu_2}[f]({\bf \frac{w}{b}})\right|=\left|\mathcal F^{\mathbb H}[G_f]({\bf w})\right|
\end{eqnarray}
Since $\mathbb Q^{\mathbb H}_{\mu_1,\mu_2}[f]({\bf w})$ is $\varepsilon_{E_2}-$concentrated on  $E_2\subseteq\mathbb R^2,$ therefore  (\ref{dh1}) implies  $\mathcal F^{\mathbb H}[G_f]$ is $\varepsilon_{E_2}-$concentrated on  ${\bf b}E_2\subseteq\mathbb R^2.$\\
Also from (\ref{func hf}), we have
\begin{equation}\label{dh2}
|G_f({\bf x})|=\sqrt{b_1b_2}|f({\bf x})|.
\end{equation}
By the given condition $f$ is $\varepsilon_{E_1}-$concentrated on  $E_1\subseteq\mathbb R^2,$i.e.

\begin{equation}\label{dh3}
\left(\int_{\mathbb R^2\setminus E_1}|f({\bf x})|^2d{\bf x}\right)^{1/2}\le\varepsilon\|f\|_2.
\end{equation}
From (\ref{dh2}) and (\ref{dh3}), we obtain
\begin{equation*}
\left(\int_{\mathbb R^2\setminus E_1}|G_f({\bf x})|^2d{\bf x}\right)^{1/2}\le\varepsilon\|G_f\|_2.
\end{equation*}
Which implies $G_f\in L^2(\mathbb R^2,\mathbb H)$ is $\varepsilon_{E_1}-$concentrated on  $E_1\subseteq\mathbb R^2.$
Hence we proved that the function $G_f({\bf x})$ and its QFT $\mathcal F^{\mathbb H}[G_f]({\bf w})$ are $\varepsilon_{E_1}-$concentrated on  $E_1\subseteq\mathbb R^2$ and $\varepsilon_{E_2}-$concentrated on  ${\bf b}E_2\subseteq\mathbb R^2,$ respectively. Therefore by Lemma \ref{lem dh}, we have
\begin{equation*}
|E_1||{\bf b}E_2|\ge2\pi(1-\varepsilon_{E_1}-\varepsilon_{E_2})^2,
\end{equation*}
so that
\begin{equation*}
|E_1||E_2|\ge\frac{2\pi}{|{\bf b}|}(1-\varepsilon_{E_1}-\varepsilon_{E_2})^2,
\end{equation*}
Which completes the proof.
\end{proof}

\section{Conclusion}\label{sec 4}

In the study, we have accomplished three major objectives: first, we have introduced the notion of quaternion
quadratic-phase Fourier transform (Q-QPFT). Second, we establish the fundamental properties of the proposed transform,
including the parseval's formula, inversion formula, shift and modulation by using the fundamental relationship between Q-QPFT and QFT.
 Third, we investigate some different forms of UPs associated with Q-QPFT including   Heisenberg UP, logarithmic UPs, Hardy's UP, Beurling’s UP, and Donoho-Stark’s UP. In our future works we shall study the short-time quadratic-phase Fourier transform in the quaternion setting.
\section*{Declarations}
\begin{itemize}
\item  Availability of data and materials: The data is provided on the request to the authors.
\item Competing interests: The authors have no competing interests.
\item Funding: No funding was received for this work
\item Author's contribution: Both the authors equally contributed towards this work.
\item Acknowledgements: This work is supported by the  Research  Grant\\
(No. JKST\&IC/SRE/J/357-60) provided by JKST\&IC,  UT of J\& K,
India.

\end{itemize}

{\bf{References}}
\begin{enumerate}

{\small {

\bibitem{akp1}Castro, L.P., Minh, L.T., Tuan, N.M.: New convolutions for quadratic-phase Fourier integral
operators and their applications. Mediterr. J. Math.15, 1-17 (2018).

 \bibitem{n1}Castro, L. P., Haque, M. R., Murshed, M.M.: Saitoh S, Tuan NM. Quadratic Fourier
transforms. Ann. Funct. Anal. AFA  5(1), 10-23 (2014).
 \bibitem{n2}Saitoh, S.: Theory of reproducing kernels: Applications to approximate solutions of bounded linear operator
functions on Hilbert spaces. Amer. Math. Soc. Trans. Ser.  230(2), 107-134 (2010).
 \bibitem{n3}Bhat, M.Y., Dar, A.H.,Urynbassarova, D., Urynbassarova, A.: Quadratic-phase wave packet transform .Optik - International Journal for Light and
Electron Optics.\\
 DOI:10.1016/j.ijleo.2022.169120.(2022).
\bibitem{n4}Shah, F.A., Nisar, K.S., Lone, W.Z., Tantary, A.Y.: Uncertainty principles for the quadratic-phase Fourier
transforms. Math. Methods Appl. Sci. DOI: 10.1002/mma.7417.(2021).

\bibitem{novel17}Fu, Y.X., Li, L.Q.: Paley–Wiener and Boas theorems for the quaternion Fourier
transform. Adv. Appl. Clifford Algebras 23(4), 837–848 (2013)
\bibitem{2s20}Hitzer E. Quaternion Fourier transform on quaternion fields and generalizations. Adv Appl Clifford
Algebras 2007; 17(3): 497–517.
\bibitem{novel22} Kou, K.I., Morais, J.: Asymptotic behaviour of the quaternion linear canonical
transform and the Bochner–Minlos theorem. Appl. Math. Comput. 247, 675–
688 (2014)
\bibitem{novel23} Kou, K.I., Ou, J.Y., Morais, J.: On uncertainty principle for quaternionic linear
canonical transform. Abstr. Appl. Anal. 2013(1), 94–121 (2013)

\bibitem{novel33} Wei, D.Y., Li, Y.M.: Different forms of Plancherel theorem for fractional
quaternion Fourier transform. Opt. Int. J. Light Electron Opt. 124(24), 6999–
7002 (2013)
\bibitem{novel34} Xu, G.L., Wang, X.T., Xu, X.G.: Fractional quaternion Fourier transform,
convolution and correlation. Signal Process. 88(10), 2511–2517 (2008)
\bibitem{o1}Bhat, M.Y., Dar, A.H. Dar.: The algebra of 2D gabor quaternionic offset linear canonical transform and uncertainty principles, J. Anal. (2021) http://dx.doi.org/
10.1007/s41478-021-00364-z.
\bibitem{o2}Bhat, M.Y., Dar, A.H.: Octonion spectrum of 3D short-time LCT signals. Optik - International Journal for Light and
Electron Optics. DOI:10.1016/j.ijleo.2022.169156.(2022).

\bibitem{o3}Bhat, M.Y., Dar, A.H.:  Donoho Starks and Hardys Uncertainty Principles for the Short-time Quaternion Offset Linear Canonical Transform; http://arxiv.org/abs/2110.02754v1(2021).

\bibitem{o4} Bhat, M.Y., Dar, A.H. Dar.:  Uncertainty Inequalities for 3D Octonionic-valued
Signals Associated with Octonion Offset Linear
Canonical Transform; arXiv:2111.11292 [eess.SP](2021).

\bibitem{1d13}Snopek, K.M.: The study of properties of n-d analytic signals and their spectra in complex and hypercomplex domains, Radio Eng. 21 (1) (2012) 29–36 .
\bibitem{1d14}  Sangwine, S.J.,   Ell, T.A.:  Colour image filters based on hypercomplex convolution, IEEE Proc. Vis. Image Signal Process. 49 (21) (20 0 0) 89–93 .
\bibitem{1d15}Pei, S.C.,  Chang, J.H.,  Ding, J.J.: Color pattern recognition by quaternion corre- lation, in: IEEE International Conference Image Process., Thessaloniki,

\bibitem{1d18}  Sangwine, S.J.,  Evans, C.J.,   Ell, T.A.: Colour-sensitive edge detection using hyper- complex filters, in: Proceedings of the 10th European Signal Processing Con- ference EUSIPCO, Tampere, Finland, 1, 20 0 0, pp. 107–110 .
 \bibitem{1d19} Gao, C.,  Zhou, J., Lang,  F.,  Pu, Q.,  Liu, C.: Novel approach to edge detection of color image based on quaternion fractional directional differentiation, Adv. Autom. Robot. 1 (2012) 163–170 .
\bibitem{1d20} Took, C.C.,   Mandic, D.P.:  The quaternion LMS algorithm for adaptive filtering of hypercomplex processes, IEEE Trans. Signal Process. 57 (4) (2009) 1316–1327
  \bibitem{1d21} Witten, B.,   Shragge, J.:  Quaternion-based signal processing, stanford exploration project, New Orleans Annu. Meet. (2006) 2862–2866.
\bibitem{1d22}  Bülow, T., Sommer, G.: The hypercomplex signal-a novel extensions of the an- alytic signal to the multidimensional case, IEEE Trans. Signal Process. 49 (11) (2001) 2844–2852.

 \bibitem{1d23} Bayro-Corrochano, E. N.,  Trujillo, Naranjo, M.: Quaternion Fourier descriptors for preprocessing and recognition of spoken words using images of spatiotemporal representations, J. Math. Imaging Vis. 28 (2) (2007) 179–190
 \bibitem{1d24} Bas, P., LeBihan, N.,  Chassery, J. M.:  Color image water marking using quaternion Fourier transform, in: Proceedings of the IEEE International Conference on Acoustics Speech and Signal and Signal Processing, ICASSP, HongKong, 2003, pp. 521–524 Greece, October 7–10, 2010, pp. 894–897.

\bibitem{novel9}Cohen, L.: Time-Frequency Analysis: Theory and Applications. Prentice Hall
Inc, Upper Saddle River (1995).
\bibitem{novel12}Dembo, A., Cover, T.M., Thomas, J.A.: Information theoretic inequalities.
IEEE Trans. Inf. Theory 37(6), 1501–1518 (2002).

\bibitem{novel16}Folland, G.B., Sitaram, A.: The uncertainty principle: a mathematical survey.
J. Fourier Anal. Appl. 3(3), 207–238 (1997).

\bibitem{gen1}El Haoui Y, Fahlaoui S. The uncertainty principle for the two-sided quaternion Fourier
transform.Mediterr JMath. 2017;14:681. Article number: 221. doi:10.1007/s00009-017-1024-5.
\bibitem{gen3}  Haoui, Y. E., Fahlaoui S. Beurling’s theorem for the quaternion Fourier transform. J Pseudo-Differ Oper Appl. 2020;11:187–199. doi:10.1007/s11868-019-00281-7
\bibitem{gen4} Beurling A. The collect works of Arne Beurling. Boston: Birkhauser; 1989. 1–2.
\bibitem{gen5} Hörmander L. A uniqueness theorem of Beurling for Fourier transform pairs. Ark Mat.
1991;29:237–240. doi:10.1007/BF02384339.
\bibitem{s1}Bahri, M., Ashino, R.:A Simplified Proof of Uncertainty Principle for
Quaternion Linear Canonical Transform. Abstract and Applied Analysis. DOI:10.1155/2016/5874930.(2016)
\bibitem{s2} Kou, K. I., Ou, J.Y., and Morais, J.: On uncertainty principle for
quaternionic linear canonical transform,” Abstract and Applied
Analysis, vol. 2013, Article ID 725952, 14 pages, 2013.
\bibitem{s3}Kou, K. I., Morais, J.: Asymptotic behaviour of the quaternion
linear canonical transform and the Bochner-Minlos theorem.
Applied Mathematics and Computation, vol. 247, no. 15,
pp. 675–688, 2014.

\bibitem{2sself}Zhu, X., Zheng, S.: Uncertainty principles for the two-sided offset quaternion linear canonical
transform. Math. Methods Appl. Sci. DOI:10.1002/mma.7692.(2021).
\bibitem{2s22}Huo H.: Uncertainty principles for the offset linear canonical transform. Circuits Sys Signal Process.
2019; 38: 395–406.

\bibitem{novel32} Stern, A.: Uncertainty principles in linear canonical transform domains and
some of their implications in optics. J. Opt. Soc. Am. Opt. Image Sci. Vis.
25(3), 647–652 (2008).
\bibitem{novel13}Kou, K.I., Yang, Y., Zou, C.: Uncertainty principle for measurable sets and
signal recovery in quaternion domains. Math. Methods Appl. Sci. 40(11), 3892–
3900 (2017).

\bibitem{novel25}Shi, J., Han, M., Zhang, N.: Uncertainty principles for discrete signals associated
with the fractional Fourier and linear canonical transforms. Signal Image
Video Process. 10(8), 1519–1525 (2016).
\bibitem{novel30}Donoho, D.L., Stark, P.B.: Uncertainty principles and signal recovery. Siam J.
Appl. Math. 49(3), 906–931 (1989).
\bibitem{sp}Fan, X.L., Kou, K.I., Liu, M.S. :Quaternion Wigner–Ville distribution associated with the linear
canonical transforms. Signal Processing. DOI:10.1016/j.sigpro.2016.06.018.(2016).

\bibitem{sp12}Bahri, M., Hitzer, E. S., Hitzer, A., Ashino, R.:  An uncertainty
principle for quaternion Fourier transform. Computers
and Mathematics with Applications, vol. 56, no. 9, pp. 2398–2410,
\bibitem{2s9}Chen, L., Kou, K., Liu, M.: Pitt’s inequality and the uncertainty principle associated with the quaternion
Fourier transform. J Math Anal Appl. 2015; 423(1): 681–700.
\bibitem{2s26}Lian P.: Uncertainty principle for the quaternion Fourier transform. J Math Anal Appl. 2018; 467:
1258–1269.
}}
\end{enumerate}

\end{document}